
\documentclass[12pt,a4paper]{amsart}

\usepackage{latexsym}
\usepackage{amsmath}
\usepackage{amssymb}
\usepackage{amsthm}
\usepackage{amsxtra}
\usepackage{amsfonts}
\usepackage{mathrsfs}
\usepackage{amsbsy}
\usepackage{graphicx}
\usepackage{hyperref}

\usepackage{stackengine}


\newtheorem{teo}{Theorem}[section]
\newtheorem{dfn}[teo]{Definition}
\newtheorem{lem}[teo]{Lemma}

\newtheorem{prop}[teo]{Proposition}
\newtheorem{remark}[teo]{Remark}

\numberwithin{equation}{section}



%
%


\renewcommand{\Im}{\operatorname{Im}\,}
\renewcommand{\Re}{\operatorname{Re}\,}

\newcommand{\HH}{\mathcal H}

\newcommand{\DD}{\mathcal D}

\newcommand{\al}{\alpha}
\newcommand{\ga}{\gamma}

\newcommand{\la}{\lambda}

\newcommand{\ome}{\omega}

\newcommand{\ba}{\begin{eqnarray}} \newcommand{\ea}{\end{eqnarray}}
\newcommand{\be}{\begin{equation}} \newcommand{\ee}{\end{equation}}
\newcommand{\bdm}{\begin{displaymath}} \newcommand{\edm}{\end{displaymath}} 
\newcommand{\brr}{\begin{array}}\newcommand{\err}{\end{array}}

\newcommand{\ri}{\right}

\providecommand{\abs}[1]{\lvert #1 \rvert}

\newcommand{\bml}{\begin{gather}} 
\newcommand{\eml}{\end{gather}}

\newcommand{\beq}{\begin{equation}}
\newcommand{\eeq}{\end{equation}}
\newcommand{\RE}{\mathbb{R}}

\def\CO{{\mathbb C}}

\renewcommand{\leq}{\leqslant}
\renewcommand{\geq}{\geqslant}
\renewcommand{\epsilon}{\varepsilon}


\newcommand{\x}{{\bf x}}

\renewcommand{\k}{{\bf k}}
\newcommand{\0}{{\bf 0}}


\setlength{\textwidth}{18cm}
\setlength{\textheight}{24cm}
\setlength{\oddsidemargin}{0cm}
\setlength{\evensidemargin}{0cm}
\setlength{\marginparwidth}{2cm}
\hoffset=-1truecm
\voffset=-1.5truecm
\footskip = 30pt
\marginparsep=-0.2cm



\title[]{Blow-up and instability of standing waves\\ for the NLS
with a point interaction in dimension two}


\author[]{Domenico Finco}
\address{Facolt\`a di Ingegneria, Universit\`a Telematica
Internazionale Uninettuno,  Corso Vittorio Emanuele II 39, 00186 Roma, Italy}
\email{d.finco@uninettunouniversity.net}

\author[]{Diego Noja}
\address{Dipartimento di Matematica e Applicazioni, Universit\`a
 di Milano Bicocca,  via Roberto Cozzi 55, 20126 Milano, Italy}
\email{diego.noja@unimib.it}

\begin{document}

\begin{abstract} 
In the present note we study the focusing NLS equation in dimension two with a point interaction and in the supercritical regime, showing two results. 
After obtaining the (nonstandard) virial formula, we exhibit a set of initial data that blow-up. Moreover we show that the standing waves $e^{i\omega t} \varphi_\omega$ corresponding to ground states $\varphi_\omega$ of the Action functional are strongly unstable, at least for sufficiently high $\omega$. \end{abstract}

\maketitle

\begin{footnotesize}
 \emph{Keywords:} Non-linear Schr\"odinger equation; Point interactions; Blow-up; Instability of standing waves.
 
 \emph{MSC 2020:} 35J10, 35Q55, 35A21.
 \end{footnotesize}




\section{Introduction}
In the present paper we study the blow-up of solutions of a focusing Nonlinear Schr\"odinger equation (NLS) with a power nonlinearity in two dimension and in the $L^2$ supercritical regime, perturbed by a point defect. The point defect is represented as a point interaction, sometimes improperly called delta potential. Namely, we consider the model
\begin{equation}\label{pinco}
\left\{\begin{aligned}
i \dot\psi (t)  &= {\HH_\alpha} \psi (t) - |\psi|^{p-1}\psi \\
 \psi (0)  &=  \psi_0
\end{aligned} \right.\end{equation}
where $\mathcal H_\al$ is defined as a self-adjoint extension of the symmetric operator $-\Delta$ starting from the domain $C^\infty(\RE^2\setminus\{0\})$, and $\alpha $ is a parameter classifying the self-adjoint extension. A typical feature of the point interactions $\mathcal H_\al$ is that its operator domain $\mathcal D_\al$ or its energy domain $\mathcal D^{\frac{1}{2}}_\al$ (see Section 2.1 for details) are larger than the corresponding domains for the Laplacian, respectively the Sobolev spaces $H^2$ and $H^1$. This is the reason why they have to be considered as singular perturbation of the Laplacian operator (see Section 2.1 for details or the treatise \cite{Albeverio}).  
The operator $\mathcal H_\al$, that can be properly defined only in dimension $n\leq 3$, describes a \emph{zero range interaction}, meaning that the interaction is concentrated at a point. In Quantum Mechanics this fact is exploited to describe situations in which the details of the interactions
are irrelevant and the effective behavior of the system is well described by the Hamiltonian $\mathcal H_\al$, where a single physical parameter characterizes the behavior of the system. This occurs for example in system of non-relativistic particles at low temperature, where the thermal wavelength is much larger then the range of the two body interactions, so that the only effective parameter is the scattering length, directly related to $\al$ (see \cite{Albeverio} for extensive treatment and bibliography). In the case of Nonlinear Schr\"odinger equation \ref{pinco} in which a nonlinear continuous medium is considered, for example a Kerr medium in fiber optics or also a Bose-Einstein condensate, both described in suitable approximation by the NLS equation, the singular perturbation of the Laplacian given by $\mathcal H_\al$ is typically interpreted as the presence in the medium of a defect perturbing the wave propagation.
This model has been studied extensively in one dimension, where a wealth of results have been obtained as regards well posedness, blow-up, existence of standing waves and their orbital and asymptotic stability, with several variation on the theme (see \cite{GHW, FOO08, LeCFFKS08, OY16, CM19, GO20, MMS20} and references therein for a sample of the literature). \\ The model in dimension two and three has been tackled only recently. The well posedness of the two dimensional model has been given first in the strong setting, i.e. for solutions in the operator domain   $\mathcal D_\al$ in \cite{CFN21} (where also the three dimensional case is treated). Then the problem as been settled in in the energy space, i.e. for solutions in $\mathcal D^{\frac{1}{2}}_\al$ in \cite{FGI22} (see Section 2.2.1 below for the state-of-the-art of well-posedness results). 
The critical nonlinearity power in dimension two, namely the power nonlinearity above which global well posedness is not anymore granted, as in the unperturbed model is $p=3$ (notice in this respect the rather different behavior of the model studied in \cite{ACCT20}). In this paper we want to give information about the blow-up of solutions for $p>3$. We will firstly show that for definite and large classes of initial data $\psi_0$ one has a finite existence time $T^*(\psi_0)$. Then we will show strong instability  behavior around ground states of the action, i.e. existence of blowing-up states in any neighborhood of such ground states. The starting point is the formula for the second derivative of the variance, or virial identity, obtained in Section 3 (see Lemma \ref{lem2sec3}). Such a formula contains an anomalous term with respect to the standard unperturbed model, which is positive definite and not conserved by the evolution, and that prevents a simple identification of an invariant set of initial data that blow-up. To overcome the issue, we adopt a strategy originally developed in the classical paper \cite{BC81} for the unperturbed model (see for more details Section 8.2 in \cite{caz}). However one has to suitably modify the analysis, exploiting the variational properties of the action functional $S_\omega$ on the Nehari manifold (see Section 2.2.2 for definitions and further details). Existence and properties of the ground state $\varphi_\omega$ of the action have been studied in \cite{ABCT22} and \cite{FGI22}. In particular $\varphi_\omega$ exists for any $\alpha$ and for any $\omega> -E_\al$ where $-E_\al$ is the always existing eigenvalue of $\mathcal H_\al$. Our first main result gives a class of initial data (containing an open set in the phase space) that undergoes blow-up. In the statement below, $E$ is the total energy \eqref{energy}, $Q$ is the functional defined in \eqref{defQ}) and $\Sigma_\alpha$ is the subset of finite energy states $\mathcal D_\al^{\frac{1}{2}}$ with $\|\x\psi\|_{2}<\infty$ (see formula \eqref{Sigmaal}).
\begin{teo}\label{blowup1}
Let $p > 3$ and $\psi_0 \in \Sigma_\al $.
\ Suppose that $S(\psi_0)<S_\omega(\phi_\omega)\ ,$ $E(\psi_0)\geq 0\ ,$ and $Q(\psi_0)<0\ .$ Then $T^*(\psi_0) < +\infty\ .$\\
\end{teo}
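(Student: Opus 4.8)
The plan is to adapt the variational blow-up scheme of Berestycki--Cazenave \cite{BC81} (see also Section~8.2 of \cite{caz}) to the singular form $\HH_\al$. The engine is the concavity argument for the variance $V(t):=\|\x\psi(t)\|_2^2$: I would show that $V''(t)\leq-\de'<0$ for a fixed $\de'>0$ on $[0,T^*(\psi_0))$, so that the nonnegative quantity $V$ would have to vanish in finite time, forcing $T^*(\psi_0)<+\infty$. Because the virial identity of Lemma~\ref{lem2sec3} carries an extra positive, non-conserved term, the naive Glassey region is not invariant, so I replace it by a variationally defined set below the ground-state level. First I record that, by well-posedness of \eqref{pinco} in $\mathcal D^{\frac{1}{2}}_\al$ from \cite{FGI22}, both the mass $\|\psi(t)\|_2^2$ and the energy $E(\psi(t))$ are conserved; hence the action is conserved, $S_\omega(\psi(t))\equiv S_\omega(\psi_0)<S_\omega(\phi_\omega)=:d$. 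A preliminary lemma is needed to guarantee that $\psi(t)$ remains in $\Sigma_\al$ and that $V(t)\in C^2$, which I would obtain from the exponential decay and merely logarithmic singularity of the charge profile $\GG_\la$ together with a regularisation argument legitimising Lemma~\ref{lem2sec3}.

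Next I would prove that $\{\psi\in\Sigma_\al:\ S_\omega(\psi)<d,\ Q(\psi)<0\}$ is invariant under the flow. By the variational results of \cite{ABCT22,FGI22}, $\phi_\omega$ minimises $S_\omega$ over the manifold $\{Q=0\}$ with value $d$, so every nonzero $\psi$ with $Q(\psi)=0$ satisfies $S_\omega(\psi)\geq d$. Since $t\mapsto Q(\psi(t))$ is continuous, is negative at $t=0$, and $\psi(t)\neq0$ by mass conservation, it cannot vanish: at a first zero one would have $S_\omega(\psi(t))\geq d$, contradicting $S_\omega(\psi(t))=S_\omega(\psi_0)<d$. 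Thus $Q(\psi(t))<0$ for all $t$.

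I then upgrade this to a uniform bound. Writing $\psi^\la(\x)=\la\,\psi(\la\x)$ for the $L^2$-preserving dilation and $P(\la):=S_\omega(\psi^\la)$, the group property $(\psi^\la)^\mu=\psi^{\la\mu}$ gives $Q(\psi^\la)=\la\,P'(\la)$, so $Q(\psi)=P'(1)$ and the maximiser $\la^*\in(0,1)$ of $P$ satisfies $Q(\psi^{\la^*})=0$, whence $P(\la^*)\geq d$. Exploiting the concavity of $P'$ on $[\la^*,1]$ yields a coercive estimate $Q(\psi)\leq c\,(S_\omega(\psi)-d)$ with $c>0$. Along the flow this gives $Q(\psi(t))\leq c\,(S_\omega(\psi_0)-d)=:-\de<0$ uniformly in $t$, and substituting into Lemma~\ref{lem2sec3} produces the desired $V''(t)\leq-\de'<0$, completing the concavity argument.

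The crux is this last coercive step combined with the anomalous term. Since the quadratic form $F_\al[\psi]=\langle\psi,\HH_\al\psi\rangle$ does \emph{not} scale as $\la^2$ under $\psi\mapsto\psi^\la$ (in two dimensions the Green's function carries a logarithm and an $\al$-dependent charge term), $P(\la)$ is not the clean polynomial of the free problem, so the sign of $P''$, the location of $\la^*$, and the constant $c$ in the coercive estimate must be extracted directly from the explicit $\la$-dependence of $F_\al[\psi^\la]$. This is also where the hypothesis $E(\psi_0)\geq0$ enters, to dominate the positive anomalous contribution and to ensure that the coercive gain $8\,Q(\psi(t))\leq-8\de$ overcomes the non-conserved anomalous term appearing in $V''(t)$.
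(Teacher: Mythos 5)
Your overall architecture --- an invariant region below the ground-state action level, uniform negativity of $Q$ along the flow, and the concavity argument applied to the virial identity of Lemma \ref{lem2sec3} --- is the same as the paper's, and your preliminary steps (the $\Sigma$-invariance and $C^2$ regularity of the variance via regularization, carried out in the paper in Lemmas \ref{lem1sec3} and \ref{lem2sec3}, and the identity $Q(\psi^\lambda)=\lambda P'(\lambda)$, which is \eqref{Qsigma}) are fine. But the heart of your argument has a genuine gap: the invariance step rests on the claim that every nonzero $\psi$ with $Q(\psi)=0$ satisfies $S_\omega(\psi)\geq d$, which you attribute to \cite{ABCT22,FGI22}. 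Those works characterize $d$ as the minimum of $S_\omega$ over the \emph{Nehari} manifold $\{N_\omega=0\}$, not over $\{Q=0\}$; in this model the two manifolds genuinely differ, because $\mathcal F(\psi^\sigma)=\sigma^2\mathcal F(\psi)+\frac{\sigma^2}{2\pi}\log\sigma\,|q|^2$ destroys the clean scaling that identifies them in the free case. Transferring minimality from $\{N_\omega=0\}$ to $\{Q\leq 0\}$ is precisely the content of the paper's Lemma \ref{mainlemma1}, proved by rescaling $\varphi$ to $\sigma_0$ so that $\|\varphi^{\sigma_0}\|_{p+1}=\|\varphi_\omega\|_{p+1}$ (so Proposition \ref{variational} applies) and analyzing $g(\sigma)=S_\omega(\varphi^\sigma)-\frac{\sigma^2}{2}Q(\varphi)$, for which $\sigma=1$ is automatically critical; crucially, that lemma \emph{needs} the hypothesis $E(\varphi)\geq 0$, which is what gives $g(0^+)\leq g(1)$ and makes $\sigma=1$ the global maximum. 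Your invariance argument never invokes $E(\psi_0)\geq 0$, and the hypothesis cannot be postponed to the virial step as your last paragraph suggests: once $Q(\psi(t))\leq-\delta$, the identity $\frac{d^2}{dt^2}I=8Q(\psi)$ has no leftover anomalous term to dominate, since $\frac{1}{4\pi}|q|^2$ already sits inside $Q$. That Section 4 of the paper must replace Lemma \ref{mainlemma1} by a different and much harder lemma, with additional constraints and a lengthy monotonicity check, when $E\geq 0$ is dropped shows the claim is certainly not free.

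Your quantitative step is also broken as stated. Writing $P(\sigma)=S_\omega(\psi^\sigma)$, formula \eqref{S'} (valid for any $\psi\in \DD^{\frac{1}{2}}$ by the remark after Proposition \ref{derivateS}) gives $P'(\sigma)=\sigma\bigl(A+B\log\sigma-C\sigma^{p-3}\bigr)$ with $B=\frac{1}{2\pi}|q_\psi|^2$; whenever $q_\psi\neq 0$ this is negative both near $\sigma=0^+$ (the logarithm dominates) and at $\sigma=1$ (where $P'(1)=Q(\psi)<0$), so $P$ may be strictly decreasing on $(0,1)$ and your maximizer $\lambda^*\in(0,1)$ with $Q(\psi^{\lambda^*})=0$ need not exist. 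Moreover the asserted concavity of $P'$ on $[\lambda^*,1]$ fails in general, since by \eqref{S'''} $P'''(\sigma)=\frac{B}{\sigma}-C(p-2)(p-3)\sigma^{p-4}$ changes sign. The paper needs neither: the uniform bound is an immediate rearrangement of Lemma \ref{mainlemma1}, namely $Q(\psi(t))<2\bigl(S_\omega(\psi(t))-S_\omega(\varphi_\omega)\bigr)=2\bigl(S_\omega(\psi_0)-S_\omega(\varphi_\omega)\bigr)<0$ by conservation of mass and energy, i.e.\ your constant $c$ is simply $2$ and no separate coercivity analysis is required. To repair your proof, replace the unproven minimality over $\{Q=0\}$ and the $\lambda^*$ construction by a proof of Lemma \ref{mainlemma1} along the paper's lines, using $E\geq 0$ where indicated.
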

\noindent
We notice that analogous results with a similar strategy have been already obtained in different models, including the already mentioned one dimensional delta interaction (see in particular \cite{FO18, O19}); in this case however the delta term is a form perturbation of the Laplacian, and in this sense it can be considered a standard potential, allowing for an easier treatment in comparison to the present model. Notice also that the virial identity (see \eqref{virial2}) needs a somewhat different treatment than the standard formula; in particular we analyze the transformation properties of the mass preserving map $T^\sigma$ 
given by dilatations (see Proposition \ref{Tsigma1}). The second result concerns the strong instability of the standing waves, i.e. the fact that in the vicinity of any standing waves there are solutions that blow-up (see Definition \ref{stronginstdef}). This fact, again following the ideas contained in \cite{BC81} and in the cited papers related to more standard potential perturbation of the Laplacian, is contained in the second main result.
\begin{teo}\label{stronginst1}
Let $p > 3$, $\omega>|E_\alpha|$ and $\varphi_\omega$ a ground state of the action $S_\omega $ with $E(\varphi_\omega)>0\ .$  Then the standing wave $e^{i\omega t}\varphi_\omega $ is strongly unstable.
\end{teo}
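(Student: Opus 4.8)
\emph{Proof proposal.} The plan is to deduce strong instability from the blow-up criterion of Theorem \ref{blowup1} by means of the Berestycki--Cazenave scaling argument, adapted to the point interaction through the dilation family $T^\sigma$ of Proposition \ref{Tsigma1}. Concretely, I would fix the ground state $\varphi_\omega$ and consider the one-parameter family $\varphi_\omega^\sigma := T^\sigma \varphi_\omega$, normalized so that $\sigma = \sigma_0$ corresponds to the identity, i.e. $\varphi_\omega^{\sigma_0} = \varphi_\omega$. Since $T^\sigma$ is mass preserving and, by Proposition \ref{Tsigma1}, depends continuously on $\sigma$ with values in $\mathcal D_\al^{\frac12}$, we have $\varphi_\omega^\sigma \to \varphi_\omega$ in the energy space as $\sigma \to \sigma_0$; hence every neighborhood of $\varphi_\omega$ contains members of this family, and it suffices to show that for $\sigma$ on a suitable side of $\sigma_0$ the datum $\varphi_\omega^\sigma$ satisfies the three hypotheses of Theorem \ref{blowup1}, so that the corresponding solution has $T^*(\varphi_\omega^\sigma) < +\infty$.

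The heart of the matter is the behavior of the scalar function $g(\sigma) := S_\omega(\varphi_\omega^\sigma)$, where $S = S_\omega$ denotes the action. First I would use the variational characterization of $\varphi_\omega$ as a minimizer of $S_\omega$ on the Nehari manifold, together with the transformation rules of Proposition \ref{Tsigma1}, to compute $g'$ and $g''$, establishing $g'(\sigma_0) = 0$ and $g''(\sigma_0) < 0$ in the supercritical range $p > 3$; this makes $\sigma_0$ a strict local maximum, so that $g(\sigma) < g(\sigma_0) = S_\omega(\varphi_\omega)$ for $\sigma \neq \sigma_0$ near $\sigma_0$, which is the first hypothesis $S(\varphi_\omega^\sigma) < S_\omega(\varphi_\omega)$. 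Next, identifying the virial functional $Q$ with a positive multiple of $\partial_\sigma S_\omega(\varphi_\omega^\sigma)$ along the family, the sign change of $g'$ at the maximum yields $Q(\varphi_\omega^\sigma) < 0$ for $\sigma$ on the \emph{concentrating} side of $\sigma_0$, giving the third hypothesis. Finally, the assumption $E(\varphi_\omega) > 0$ enters precisely here: by continuity of $\sigma \mapsto E(\varphi_\omega^\sigma)$ one has $E(\varphi_\omega^\sigma) \geq 0$ for $\sigma$ close enough to $\sigma_0$, securing the second hypothesis. Choosing $\sigma$ near $\sigma_0$ on the correct side then makes all three conditions hold at once, and Theorem \ref{blowup1} yields finite-time blow-up, i.e. strong instability in the sense of Definition \ref{stronginstdef}.

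The main obstacle I anticipate is carrying out the computation of $g'$, $g''$ and the identification of $Q$ along the family $T^\sigma \varphi_\omega$ correctly, because the dilation does not act as the plain $L^2$-preserving rescaling of $H^1$: a state in $\mathcal D_\al^{\frac12}$ decomposes into a regular part and a singular (charge) component at the origin, and the anomalous, positive, non-conserved term appearing in the virial identity \eqref{virial2} transforms in a nonstandard way under $T^\sigma$. One must verify that, despite this extra term, $\sigma_0$ remains a \emph{strict} maximum of $g$ for $p > 3$ and that $Q$ changes sign there; this is where the precise transformation rules of Proposition \ref{Tsigma1} and the Pohozaev/Nehari-type identities satisfied by $\varphi_\omega$ have to be combined with care. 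A secondary technical point is to confirm that $\varphi_\omega^\sigma \in \Sigma_\al$ (finite variance) for $\sigma$ near $\sigma_0$, which is required in order to invoke Theorem \ref{blowup1}.
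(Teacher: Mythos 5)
Your strategy is exactly the paper's: scale the ground state by the mass-preserving dilation $T^\sigma$, verify that $\varphi_\omega^\sigma$ satisfies the three hypotheses of Theorem \ref{blowup1} for $\sigma>1$ close to $1$, and conclude from $\|\varphi_\omega^\sigma-\varphi_\omega\|_{\DD^{\frac{1}{2}}}\to 0$. But one step fails as written, and it is the central one: you assert that $g(\sigma):=S_\omega(\varphi_\omega^\sigma)$ satisfies $g''(\sigma_0)<0$ ``in the supercritical range $p>3$'' as a consequence of the Nehari characterization and Proposition \ref{Tsigma1} alone, while you spend the hypothesis $E(\varphi_\omega)>0$ only on the harmless continuity statement $E(\varphi_\omega^\sigma)\geq 0$ near $\sigma=1$. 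With the notation \eqref{ABC}, i.e. $A=\mathcal F(\varphi_\omega)+\frac{1}{4\pi}|q_\omega|^2$, $B=\frac{1}{2\pi}|q_\omega|^2$, $C=\frac{p-1}{p+1}\|\varphi_\omega\|_{p+1}^{p+1}$, stationarity gives $A=C$ (formula \eqref{A=C}, equivalently $Q(\varphi_\omega)=0$) and, by \eqref{S''},
\begin{equation*}
\frac{d^2}{d\sigma^2}S_\omega(\varphi_\omega^{\sigma})\Big\vert_{\sigma=1}=(3-p)A+B\ ,
\end{equation*}
which is negative if and only if $B<(p-3)A$. Since the ground states of the point-interaction problem carry a nontrivial charge $q_\omega\neq 0$, the anomalous term $\frac{\sigma^2}{4\pi}\log\sigma\,|q_\omega|^2$ in \eqref{Ssigma} contributes the strictly positive $B$, and $p>3$ by itself does not force the sign --- this is precisely why the paper devotes its last section to the strictly weaker hypothesis $\frac{d^2}{d\sigma^2}S_\omega(\varphi_\omega^{\sigma})\vert_{\sigma=1}\leq 0$, which is a genuine extra assumption there. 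The real work of $E(\varphi_\omega)>0$ is to close exactly this gap: using $A=C$ one checks, as the paper does, that $E(\varphi_\omega)>0$ is equivalent to $\frac{1}{2\pi}|q_\omega|^2<\frac{2(p-3)}{p+1}\|\varphi_\omega\|_{p+1}^{p+1}$, i.e. $B<\frac{2(p-3)}{p-1}A$, and this implies $B<(p-3)A$ because $2<p-1$ when $p>3$. You flagged the anomalous term as your ``main obstacle'' but did not resolve it, and your stated reason for $g''<0$ is the wrong one.

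Once that computation is inserted, the rest of your plan goes through as in the paper: $g'(1)=0$ and $g''(1)<0$ give $S_\omega(\varphi_\omega^\sigma)<S_\omega(\varphi_\omega)$ for $\sigma>1$ near $1$; the identity \eqref{Qsigma}, $Q(\varphi_\omega^\sigma)=\sigma\frac{d}{d\sigma}S_\omega(\varphi_\omega^{\sigma})$, then yields $Q(\varphi_\omega^\sigma)<0$ on the same side (your ``concentrating side'' is indeed $\sigma>1$). Your continuity argument for $E(\varphi_\omega^\sigma)\geq 0$ is correct and in fact slightly more careful than the paper's text, which claims $\varphi_\omega^\sigma\in U_\omega\cap\Sigma$ for all $\sigma>1$ without checking the energy sign away from $\sigma=1$; only $\sigma$ near $1$ is needed for instability. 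The finite-variance point you raise is settled as in the paper: elliptic regularity and the exponential decay of $G^\lambda$ in \eqref{asymG} give $\varphi_\omega\in\Sigma$, and $T^\sigma$ preserves $\Sigma$.
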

It is well known that in the unperturbed NLS equation, the ground states with $p>3$ have positive energy, while in the present case one expects positive energy only for sufficiently big $\omega $ in analogy with the known case of the presence of external potentials (See Remark \ref{omegagrande}). \\
In the last Section we consider a different condition for the strong instability of standing waves, replacing the positive total energy with the more general condition $\frac{d^2}{d\sigma^2}S_\omega(T^\sigma\varphi_\omega)\vert_{\sigma=1} \leq0 $. It is known that this requirement is sufficient to guarantee strong instability in the case of NLS with a delta potential in one dimension and with the generalized Coulomb potential with arbitrary dimension (see \cite{FO18, O19}). We show first that an invariant set of blowing-up initial data exist (see Theorem \ref{blowup2}), and then that the Action ground states with  $\frac{d^2}{d\sigma^2}S_\omega(T^\sigma \varphi_\omega) \vert_{\sigma=1}\leq0 $ belong to the norm closure of this set and so they are strongly unstable (see Theorem \ref{stronginst2}).\\
We end the introduction noticing that the above results and their proofs actually do not depend on the $\alpha$ parameter. For this reason and to ease formulae and reading we will omit the pedice $\alpha$ from Section 3 and Section 4 in which proof of the main results are given.

\section{Preliminaries}
\subsection{Point interaction in 2d} In the following we shall denote with a boldface points in $\RE^2.$ Let us recall, see for example chapter II.4 of \cite{Albeverio}, that for $n=2$ the operator $\HH_\alpha$ has the domain: 
\begin{equation*}
D (\HH_\alpha) = \left\{ \psi \in L^2 (\RE^n)|\;\psi  = \phi^\la   +q \,  G^\lambda ,\, \phi^\la \in H^2(\RE^2), \;\; q=(\Gamma^\lambda_\alpha)^{-1} \,  \phi^\la(\0)\ri\} 
\end{equation*}
with $G^\lambda$ fundamental solution of the laplacian and $\Gamma^{\lambda}_\alpha$ a certain fixed constant. Explicitly (indicating from now on by the symbol $\mathscr F$ the Fourier transform)
\begin{align}\label{Glambda}
G^\lambda&:=(-\Delta+\lambda)^{-1}\delta_\0=\frac{1}{2\pi}\mathscr F^{-1}\left[\frac{1}{|k|^2+\lambda} \right]=\frac{1}{2\pi}K_0(\sqrt\lambda|\x|)\\
\Gamma^{\lambda}_\alpha&:= 
 {\al +\frac{1}{2\pi}\ga +\frac{1}{2\pi}\ln(\sqrt{\la}/2) } \qquad  
\qquad \quad \ \ \alpha \in\RE\cup\{+\infty\}.\nonumber \end{align}
Here $K_0$ is the MacDonald function of order zero and $\gamma$ is the Euler-Mascheroni constant. The constant $\al$ is real (nontrivial interaction) or $+\infty$ ($q=0$, corresponding to the standard Laplacian). It enters in the relation $ \phi^\la(\0)=\Gamma^\lambda_\alpha\ q $, playing the role of a boundary condition at the singularity and more concretely it is related to the s-wave scattering length $a_0$ through the relation  $a_0 = (-2\pi\alpha)^{-1}$.
The number $ \la $ can be any number in $\RE^+\setminus\{-E_\alpha\}$, where $E_\alpha$ is the negative eigenvalue of $\HH_\alpha$, always existing when $\alpha\in \RE$ (see later).\ 
The action of the operator  is given by 
\begin{equation*}
(\HH_\alpha +\la)\psi = (-\Delta +\la )\phi^\la \qquad (\iff \HH_\alpha \psi = -\Delta \phi^\la -\lambda q G^\la) \qquad \forall\psi\in D(\HH_\alpha)\end{equation*}
It is easily seen and well known that while the decomposition in regular part $\phi^\lambda$ and singular part $qG^\lambda$ of any element $\psi\in D(\HH_\al)$ depends on the choice of $\lambda$, the definition of $\HH_\alpha$ does not. 
We often use the short notation $\DD_\alpha := D(\HH_\alpha)\ .$ 
One has $\sigma_c(\HH_\alpha)=\sigma_{ac}(\HH_\alpha)=[0,\infty)$; $\HH_\alpha$ has  a simple negative eigenvalue $\{ E_\al \}$ for any $ \al \in \RE$ and $\psi_\alpha$ is the corresponding eigenvector. Explicitly 
\[
\begin{aligned}
E_\al= -4 e^{-2(2\pi \al +\ga)}\ , \qquad && \psi_\al (\x) =    \frac{1}{2\pi} K_0 (  2\, e^{-(2\pi \al +\ga)}x) &&\ . 
\end{aligned}
\]
Let us also introduce the quadratic form $\mathcal F_\alpha$ on $L^2(\RE^n)$ with domain and action 
\begin{align*}
D(\mathcal F_\alpha)&=\{\psi\in L^2(\RE^n)\ |\ \exists q\in \CO,\  \phi^\lambda\in H^1(\RE^n):\  \psi=\phi^\lambda + qG^\lambda \}\\
\mathcal F_\alpha(\psi)&= \mathcal F^{\lambda}(\psi)+\Gamma^{\lambda}_\alpha |q|^2\ \ \text{and}\ \ 
\mathcal F^{\lambda}(\psi)=\|\nabla \phi^\lambda\|^2 +\lambda(\|\phi^\lambda\|^2 -\|\psi\|^2)\ 
\end{align*}
It does not depend on $\lambda$, it is symmetric, closed, bounded from below. 
The map $\psi\mapsto \mathcal F_\alpha(\psi)+\lambda\|\psi\|^2$ is positive for every $\lambda>-E_\alpha$ and it coincides with  $\langle\psi, (\HH_\alpha+\lambda)\psi\rangle$  $\forall  \psi\in D(\HH_\alpha)$. This allows to interpretate the form domain  $D(\mathcal F_\alpha)$  as the domain of the square root of the positive self-adjoint operator $\HH_\alpha+\lambda$, so that we make use of the notation
\[
D(\mathcal F_\alpha)= D\big( ( \HH_\alpha+\la)^{\frac{1}{2}} \big)=:\DD_\alpha^{\frac{1}{2}}\ , \ \ \ \ \ \ \lambda>-E_\alpha \ .
\]
Notice that algebraically and topologically one has $D(\mathcal F_\alpha)\cong H^1(\RE^2)\oplus\CO\ $ and the form domain is in a natural way a Hilbert space. By functional calculus we can introduce the scale of Hilbert spaces 
\[
\DD_\alpha^s: = D\big( ( \HH_\alpha+\la)^s \big), \qquad s \in \RE,\ \ \ \la > -E_\al \ .\] $\DD_\alpha^s$ is equipped with the norm  $\|\psi\|_{\DD^s_\alpha} : = \|( \HH_\alpha+\la)^s \psi\|$, equivalent to the graph norm of the operator $( \HH_\alpha+\la)^s $. 
In particular, the spaces $\DD_\alpha^s$ and  $\DD_\alpha^{-s}$ are in duality and 
$$
\DD_\alpha^s \hookrightarrow L^2(\RE^2) \hookrightarrow  \DD_\alpha^{-s}
 $$ 
 is a Hilbert triplet. We denote the duality product by $\langle\cdot,\cdot\rangle_{-s,s}\ .$ In the following we will only consider the case $s=\frac{1}{2} $ and we stress that $\| \psi\|_{\DD_\alpha^{\frac{1}{2}}}\cong \|\phi\|_{H^1}\oplus |q|.$\ Finally, we recall that the fundamental solution $G_\lambda$ is positive, radial, strictly decreasing and moreover it has the following asymptotic behavior (see \cite{AS72}, formulae 9.6.12 and 9.6.13 for the first asymptotic and 9.7.2 for the second)
\begin{align}\label{asymG}
G_{\lambda}=
-\frac{1}{2\pi}\ln(\frac{\sqrt{\lambda}}{2}|\x|) - \frac{\gamma}{2\pi} + \text{o}(1) \qquad  \x\to 0\ ,\qquad \qquad  G_{\lambda}\sim{\frac{1}{\sqrt{8\pi\sqrt\lambda|\x|}}e^{-\sqrt{\lambda} |\x|}} \qquad \x\to \infty \ .
\end{align}

\subsection{The NLS equation with a point interaction}\subsubsection{Well posedness}
We are interested in solutions of the Cauchy problem for the NLS equation
\begin{equation} \label{cauchy}
\left\{\begin{aligned}
i \partial_t\psi (t)  &= {\HH_\alpha} \psi (t) + f(\psi)(t) \\
 \psi (0)  &=  \psi_0\in \DD_\al\ \  \text{or}\ \ \DD_\alpha^{\frac{1}{2}}
  \end{aligned} \right.
\end{equation}
where $f(\psi) =g|\psi|^{p-1}\psi$, and $g=\pm 1$.\\
\noindent
The following theorem collects the known results about well posedness in the energy domain (mild solution) and operator domain (strong solution) for the equation \eqref{cauchy} (see  \cite{CFN21} where a detailed analysis of the well posedness of strong solutions is given, also for the three dimensional case, and \cite{FGI22} where a treatment of the solutions in the energy domain is given).
\begin{teo} 
[Well-Posedness in $ \DD_\alpha^{\frac{1}{2}}$ and  $\DD_\alpha$]
Assume $p> 1$ and  $\psi_0 \in \DD_\alpha^{\frac{1}{2}}$. Then the following properties hold true.\\
1) There exists $T>0$ and a unique weak solution of \eqref{cauchy}
in $C([0,T]; \DD_\alpha^{\frac{1}{2}}) \cap C^1([0,T];\DD_\alpha^{-\frac{1}{2}})$.\\
2) The following blow-up alternative holds. Let the maximal existence time be defined as 
\[T^*=\sup_{T>0}\left\{\psi\in C([0,T], \DD_\alpha^{\frac{1}{2}}))\cap C^1([0,T], \DD_\alpha^{-\frac{1}{2}})\ \ \text{solves mildly}\ \ \eqref{cauchy} \right\};\]
then $$\ \ \ \
\lim_{t\to T^*} \| \psi(t)\|_{\DD_\alpha^{\frac{1}{2}}}<\infty \ \ \Longrightarrow\  \ T^*=\infty .\\
$$
3) $L^2$- mass is conserved along the evolution: $\| \psi(t) \|^2=\| \psi_0\|^2\ \forall t\in [0,T^*)\ .$ \\
4) Energy is conserved along the evolution: $E(\psi(t))=E(\psi_0)\ \ \forall t\in [0,T^*)$\\  where 
\beq\label{energy}
E(\psi)=\frac{1}{2}\mathcal F_\al(\psi) +\frac{g}{p+1}\|\psi \|^{p+1}_{p+1}\qquad \forall\psi\in \DD_\al^{\frac{1}{2}}.
\eeq
5) Let $\psi_0 \in \DD_\al$. Then there exists $T>0$ and a unique strong solution $\psi$ of \eqref{cauchy}
in $C([0,T]; \DD_\alpha) \cap C^1([0,T];L^2(\RE^2))$.\\
6) Let $\psi_0 \in \DD_\al$ and the maximal existence time be defined as 
\[
\tilde T^*=\sup_{T>0}\left\{\psi\in C([0,T],\DD(\HH_{\al}))\cap C^1([0,T], L^2) \ \ \text{solves strongly}\ \ \eqref{cauchy} \right\};\]
then 
$\ \ \ \ \lim_{t\to \tilde T^*} \| \psi(t)\|_{\DD_{\al}}<\infty \ \ \Longrightarrow\  \ \tilde T^*=\infty .$\\
7) $\tilde T^*= T^*\ .$\\
8) $T^*=+\infty$ if $g=+1$ and $p>1$ or if $g=-1$ and $1<p<3\ .$
\end{teo}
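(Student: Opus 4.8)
The plan is to run a contraction-mapping (fixed-point) argument on the Duhamel/integral formulation of \eqref{cauchy} in the energy space $\DD_\al^{\half}$. The first and decisive ingredient is a dispersive theory for the unitary group $e^{-it\HH_\al}$: one needs Strichartz estimates adapted to the singular perturbation, of the form $\|e^{-it\HH_\al}\psi_0\|_{L^r_t L^s_x}\lesssim \|\psi_0\|$ for admissible pairs $(r,s)$, together with the inhomogeneous (retarded) counterpart controlling the Duhamel term $\int_0^t e^{-i(t-\tau)\HH_\al} f(\psi)(\tau)\,d\tau$. These cannot be quoted verbatim from the free case, since $\HH_\al$ is a singular (not form-bounded) perturbation of $-\Delta$ in the critical dimension two; deriving them — handling the contribution of the singular channel $qG^\la$ and the delicate threshold/low-energy behavior of the resolvent — is exactly the hard technical core, and is where \cite{CFN21, FGI22} do the essential work.

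Granting the linear estimates, items 1, 2, 5, 6, 7 follow by the standard Kato scheme. The crucial nonlinear bound is that $f(\psi)=g|\psi|^{p-1}\psi$ is locally Lipschitz from $\DD_\al^{\half}$ into $\DD_\al^{-\half}$. Here one exploits the explicit structure $\DD_\al^{\half}\cong H^1(\RE^2)\oplus\CO$ with $\psi=\phi^\la+qG^\la$: by \eqref{asymG} the singular profile $G^\la$ has only a logarithmic singularity at the origin and exponential decay at infinity, so $G^\la\in L^s(\RE^2)$ for every $s<\infty$, whence the embedding $\DD_\al^{\half}\hookrightarrow L^s(\RE^2)$ holds for all finite $s$. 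This yields $\|f(\psi)-f(\chi)\|_{\DD_\al^{-\half}}\lesssim (\|\psi\|_{\DD_\al^{\half}}^{p-1}+\|\chi\|_{\DD_\al^{\half}}^{p-1})\|\psi-\chi\|_{\DD_\al^{\half}}$ for $p>1$, so that a contraction on a small ball of a suitable Strichartz space $C([0,T];\DD_\al^{\half})\cap(\text{auxiliary } L^r_tL^s_x)$ gives local existence and uniqueness, and reopening the fixed point at the terminal time gives the blow-up alternative (item 2). The same argument on the higher space gives items 5 and 6, while $\tilde T^*=T^*$ (item 7) is the usual persistence-of-regularity statement: a solution that stays bounded in $\DD_\al^{\half}$ and starts in $\DD_\al$ stays bounded in $\DD_\al$.

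For the conservation laws (items 3, 4) the clean route is to establish them first for $\DD_\al$-solutions, where the manipulations are rigorous, and then pass to $\DD_\al^{\half}$-data by density of $\DD_\al$ in $\DD_\al^{\half}$ and continuous dependence in the energy norm. Pairing $i\partial_t\psi=\HH_\al\psi+f(\psi)$ with $\psi$ and taking imaginary parts gives $\frac{d}{dt}\|\psi\|^2=0$, since $\HH_\al$ is self-adjoint and $\Im\langle f(\psi),\psi\rangle=0$; pairing with $\partial_t\psi$ and taking real parts gives $\frac{d}{dt}E(\psi)=0$, using that $E$ is the sum of $\half\mathcal F_\al(\psi)$ (the quadratic form of $\HH_\al$) and the nonlinear potential.

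Finally, global existence (item 8) follows from the blow-up alternative once the conserved quantities control the $\DD_\al^{\half}$-norm. In the defocusing case $g=+1$ both terms of $E$ are nonnegative (the form $\mathcal F_\al+\la\|\cdot\|^2$ is positive for $\la>-E_\al$), so mass and energy conservation bound $\|\psi(t)\|_{\DD_\al^{\half}}$ uniformly and $T^*=\infty$. In the focusing case $g=-1$ with $1<p<3$ one invokes a Gagliardo--Nirenberg inequality on the energy space, $\|\psi\|_{p+1}^{p+1}\lesssim \|\psi\|_{\DD_\al^{\half}}^{p-1}\|\psi\|_2^{2}$, whose kinetic exponent $p-1<2$ is the mark of $L^2$-subcriticality; this lets the conserved energy absorb the nonlinear term and again yields an a priori bound on the energy norm. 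The main obstacle throughout remains, as stressed above, the derivation of the Strichartz estimates for the singularly perturbed propagator in dimension two; everything downstream is a controlled adaptation of the classical Cazenave--Kato theory.
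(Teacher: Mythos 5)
You should first note that the paper itself gives no proof of this theorem: it is explicitly a collection of known results, quoted from \cite{CFN21} (strong solutions in the operator domain) and \cite{FGI22} (weak solutions in the energy space), so the comparison is with the methods of those works. Measured against them, your proposal has a genuine gap at its core. Your contraction scheme does not close as described: if the only nonlinear estimate is that $f$ is locally Lipschitz from $\DD_\al^{\half}$ to $\DD_\al^{-\half}$, the Duhamel term loses a full power of $(\HH_\al+\la)^{\half}$, and the unitary group $e^{-it\HH_\al}$, having no smoothing in these norms, cannot restore it; a fixed point in $C([0,T];\DD_\al^{\half})$ is then simply not set up. To repair this by Strichartz estimates, as you propose, one would need them in norms adapted to $(\HH_\al+\la)^{\half}$, and here there is a structural obstruction you overlook: $\DD_\al^{\half}\cong H^1(\RE^2)\oplus\CO$ is \emph{not} contained in $H^1(\RE^2)$, because $G^\la\notin H^1(\RE^2)$ (the $H^1$ integral of $\widehat{G^\la}=(2\pi)^{-1}(|\k|^2+\la)^{-1}$ diverges logarithmically in dimension two). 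Hence free-case fractional chain rules and derivative Strichartz estimates do not transfer, and such adapted estimates are not what the cited references establish or use. In fact \cite{FGI22} builds the energy-space theory (items 1--4) via the abstract energy method of Okazawa--Suzuki--Yokota --- approximation and compactness with the conservation laws obtained along the way, with no dispersive estimates at all --- while \cite{CFN21} obtains strong solutions (items 5--7) by a Kato-type fixed point at the $L^2$/operator-domain level, exploiting exactly the $L^s$ embeddings you correctly identified (valid for all finite $s$, by the logarithmic singularity and exponential decay of $G^\la$ in \eqref{asymG}), again without Strichartz. So the ingredient you call decisive is neither available off the shelf nor the actual engine of the theorem.

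Two further points. First, in item 8 your defocusing argument contains an error: it is false that ``both terms of $E$ are nonnegative,'' since $\HH_\al$ has the negative eigenvalue $E_\al$ for every $\al\in\RE$, so $\mathcal F_\al$ takes negative values (e.g.\ $\mathcal F_\al(\psi_\al)=E_\al\|\psi_\al\|^2<0$); the correct statement, which you half-notice parenthetically, is that $\mathcal F_\al(\psi)\geq E_\al\|\psi\|^2$, so that mass conservation gives $\mathcal F_\al(\psi(t))+\la\|\psi_0\|^2\geq 0$ for $\la>-E_\al$ and the a priori bound on $\|\psi(t)\|_{\DD_\al^{\half}}$ follows; the same correction is needed before your Gagliardo--Nirenberg absorption in the focusing subcritical case, where the inequality must be the adapted one of \cite{FGI22} with $\mathcal F_\al+\la\|\cdot\|^2$ in place of $\|\nabla\cdot\|^2$. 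Second, your route to items 3--4 (prove conservation for $\DD_\al$-solutions, then transfer by density and continuous dependence, using item 7) is sound in outline and close in spirit to the actual arguments, as is the persistence-of-regularity reading of item 7; these parts are fine once the local theory is built by the methods above rather than by the contraction you describe.
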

In the following we will denote as $T^*(\psi_0)$ the maximal existence time of the solution of \eqref{cauchy}. When $T^*(\psi_0)<+\infty$ we say that the solution $\psi(t)$ corresponding to the initial datum $\psi_0$ blows-up in a finite time (in the future, analogous definition for blow-up in the past). We will omit the dependence of $T^*$ on $\psi_0$ when it is clear from the contest.
\\
\subsubsection{Standing waves}
Recall that a standing wave of \eqref{cauchy} is a solution of the form $\psi(t)=e^{i\omega t}\varphi\ .$ The profile $\varphi $ is a solution of the stationary equation
\begin{equation}\label{stationary}
\HH_\alpha \varphi +\omega\varphi + f(\varphi)=0
\end{equation}
equivalent to $S'_{\omega}(\varphi)=0\ ,$ where the action functional $S_\omega$ is defined as
\begin{align}\label{action}
S_\omega(\varphi)=E(\varphi)+\frac{\omega}{2}\|\varphi\|^2 \quad \quad \forall \varphi \in \DD^{\frac{1}{2}}_\al\ .
\end{align}
The set of ground states of the action $S_\omega$ is defined as 
\begin{align}\label{ground}
\mathcal G=\left\{\varphi_\omega\in \DD_\alpha^{\frac{1}{2}}\ \text{s.t.}\ S_\omega(\varphi_\omega)\leq S_\omega(\varphi)\ \  \forall \varphi \in \DD_\alpha^{\frac{1}{2}}\ \text{satisfying}\ S_\omega'(\varphi)=0\right\}
\end{align}
Recently in \cite{FGI22} and \cite{ABCT22} existence and properties of ground states of the action $S_\omega$ for the case of attractive nonlinearity ($i.e. g=-1$) in \eqref{cauchy}  have been proved by variational methods. In particular, a ground state exists for every $\omega>-E_\alpha$ and if $\varphi_\omega\in \mathcal G$ then
\begin{align*}
d(\omega)=\inf\left\{S_\omega(\varphi)\ \text{s.t.}\ \varphi\in \DD_\al^{\frac{1}{2}},\ \varphi\neq 0,\ N_\omega(\varphi)=0 \right\}=S_\omega(\varphi_\omega)
\end{align*}
where $N_\omega$ is the Nehari functional
\begin{align*}
N_\omega(\varphi) = \mathcal F_\alpha(\varphi)+\omega\|\varphi\|^2 -\|\varphi\|^{p+1}_{p+1} \ .
\end{align*}
The following fact is an immediate consequence of the results in \cite{FGI22} and \cite{ABCT22} and it will be useful later (see also the analogous Lemma in \cite{FO18}).
\begin{prop}\label{variational}
Let  $\varphi_\omega\in \mathcal G$ a ground state of the action $S_\omega$ and $\psi\in \DD_\al^{\frac{1}{2}}\ \text{s.t.}\ \|\psi\|_{p+1}^{p+1}=\|\varphi_\omega\|_{p+1}^{p+1}\ .$ Then 
\begin{itemize}
\item [a)] $N_\omega(\psi)\geq 0$
\item [b)] $S_\omega(\psi)\geq S_\omega(\varphi_\omega).$
\end{itemize}
\end{prop}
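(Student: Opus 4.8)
The plan is to reduce both statements to the minimization property $d(\omega)=S_\omega(\varphi_\omega)$ over the Nehari manifold by a one-parameter rescaling. First I would record the homogeneities of the two building blocks of the action. Setting $I(\psi):=\mathcal F_\alpha(\psi)+\omega\|\psi\|^2$ and $P(\psi):=\|\psi\|_{p+1}^{p+1}$, multiplication by a scalar $\mu$ scales the decomposition as $\mu\psi=\mu\phi^\lambda+(\mu q)G^\lambda$, whence $\mathcal F_\alpha(\mu\psi)=\mu^2\mathcal F_\alpha(\psi)$, so that $I(\mu\psi)=\mu^2 I(\psi)$ while $P(\mu\psi)=\mu^{p+1}P(\psi)$. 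Since $\omega>-E_\alpha$, the form $I$ is positive, hence $I(\psi)>0$ for every $\psi\neq0$. From the definitions (with $g=-1$) one obtains the identity
\[
S_\omega(\psi)=\frac12\,N_\omega(\psi)+\frac{p-1}{2(p+1)}\,\|\psi\|_{p+1}^{p+1},\qquad \psi\in\DD_\alpha^{\frac12},
\]
and, since $\varphi_\omega$ lies on the Nehari manifold ($N_\omega(\varphi_\omega)=0$), it follows that $S_\omega(\varphi_\omega)=\frac{p-1}{2(p+1)}\|\varphi_\omega\|_{p+1}^{p+1}>0$.

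For part (a) I would argue by contradiction. Assume $N_\omega(\psi)<0$, i.e. $I(\psi)<P(\psi)$. As $\|\psi\|_{p+1}^{p+1}=\|\varphi_\omega\|_{p+1}^{p+1}>0$ forces $\psi\neq0$, the quantity $\mu:=(I(\psi)/P(\psi))^{1/(p-1)}$ is well defined and satisfies $0<\mu<1$. By the homogeneities, $N_\omega(\mu\psi)=\mu^2 I(\psi)-\mu^{p+1}P(\psi)=0$, so $\mu\psi\neq0$ belongs to the Nehari manifold. Using the identity above with $N_\omega(\mu\psi)=0$ and $P(\mu\psi)=\mu^{p+1}P(\varphi_\omega)$ gives
\[
S_\omega(\mu\psi)=\frac{p-1}{2(p+1)}\,\mu^{p+1}\|\varphi_\omega\|_{p+1}^{p+1}=\mu^{p+1}S_\omega(\varphi_\omega)<S_\omega(\varphi_\omega)=d(\omega),
\]
which contradicts the fact that $d(\omega)$ is the infimum of $S_\omega$ over the nonzero elements of the Nehari manifold. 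Hence $N_\omega(\psi)\geq0$.

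Part (b) is then immediate: inserting $N_\omega(\psi)\geq0$ and $\|\psi\|_{p+1}^{p+1}=\|\varphi_\omega\|_{p+1}^{p+1}$ into the identity yields $S_\omega(\psi)\geq\frac{p-1}{2(p+1)}\|\varphi_\omega\|_{p+1}^{p+1}=S_\omega(\varphi_\omega)$.

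I do not anticipate a serious obstacle; the only points requiring care are the exact $2$-homogeneity of the singular form $\mathcal F_\alpha$ (which holds precisely because rescaling $\psi$ rescales the regular part $\phi^\lambda$ and the charge $q$ by the same factor) and the strict positivity of $I$ for $\omega>-E_\alpha$, both guaranteed by the material recalled in Section 2.1.
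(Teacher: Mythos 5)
Your proof is correct and follows essentially the same route as the paper: both rest on the identity $S_\omega(\psi)=\tfrac{1}{2}N_\omega(\psi)+\tfrac{p-1}{2(p+1)}\|\psi\|_{p+1}^{p+1}$ together with the variational characterization $d(\omega)=S_\omega(\varphi_\omega)=\tfrac{p-1}{2(p+1)}\|\varphi_\omega\|_{p+1}^{p+1}$, with part b) deduced from a) in exactly the same way. The only difference is that where the paper disposes of part a) by citing Lemma 3.3 of \cite{FGI22}, you supply the standard self-contained argument behind such a lemma — rescaling $\psi\mapsto\mu\psi$ with $\mu=(I(\psi)/P(\psi))^{1/(p-1)}<1$ onto the Nehari manifold, using the exact $2$-homogeneity of $\mathcal F_\alpha$ and the positivity of $\mathcal F_\alpha+\omega\|\cdot\|^2$ for $\omega>-E_\alpha$, to contradict the minimality of $d(\omega)$.
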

\begin{proof} 
From Lemma 3.3 in \cite{FGI22} and $d(\omega)=\inf\{\frac{p-1}{2(p+1)}\|\psi\|^{p+1}_{p+1},\ \ \psi\in N_\omega\}=\frac{p-1}{2(p+1)}\|\varphi_\omega\|_{p+1}^{p+1}=S_\omega(\varphi_\omega)$ property a) follows. Taking into account a) one has $S_\omega(\varphi_\omega)=\frac{p-1}{2(p+1)}\|\varphi_\omega\|^{p+1}_{p+1} =\frac{p-1}{2(p+1)}\|\varphi_\omega\|^{p+1}_{p+1} + \frac{1}{2}N_\omega(\varphi_\omega)\leq\frac{p-1}{2(p+1)}\|\psi\|^{p+1}_{p+1} + \frac{1}{2}N_\omega(\psi)=S_\ome (\psi)\ .$
\end{proof}
Finally, we recall the definition of strong instability of a standing wave.
\begin{dfn}\label{stronginstdef}
The standing wave $\psi(t)=e^{i\omega t}\varphi_\omega\ $ is said to be strongly unstable if for every $\epsilon>0 $ there exist $\psi_0\in \DD^{\frac{1}{2}}_\alpha$ such that $\|\psi_0-\varphi_\omega\|_{\DD_\alpha^{\frac{1}{2}}}<\epsilon$  with $T^*(\psi_0)<+\infty\ .$ 
\end{dfn}
As a last preliminary definition we adapt a classic tool needed in the treatment of the virial functional to the point interaction framework.
 We put\ \
\begin{align}\label{Sigmaal}
\Sigma_\al := \{ \psi \in \DD_\al^{\frac{1}{2}}(\RE^2)\ | \ \x \psi \in L^2(\RE^2) \}.
\end{align}
\noindent

\noindent


\section{Blow-up and strong instability.}
\emph{In the following the value and sign of $\alpha$ will be irrelevant, so we will omit the corresponding pedices in the symbol $\mathcal H_\alpha $, $\mathcal F_\al$,\ $\DD_\alpha\ $, $\DD^{\frac{1}{2}}_\alpha\ $ and $\Sigma_\al$, with the sole exception of Remark \ref{dominiodil}}.
\subsection{Virial identity}\vskip5pt
\begin{lem}
Let  $\psi_0 \in \Sigma$ and $\psi \in C(\left[ 0, T^* \right);\DD^{\frac{1}{2}}(\RE^2))$ the corresponding weak maximal solution of \eqref{cauchy}. Then $ \psi \in C ( \left[ 0, T^* \right); \Sigma).$ Moreover for any fixed $\psi\in \DD^{\frac{1}{2}}(\RE^2)$ the variance
\begin{align*}
 I(t) :=  \int_{\RE^2} \abs{\x}^2 \  \abs{\psi(t,\x)}^2 \ d\x  \ 
\end{align*}
defines a $C^1(\left[0, T^* \right);\RE)$ function and

\begin{align}\label{virial1}
\frac{d}{dt} I(t) = 4 \Im \int_{{\RE^2}} \ \bar{\psi}(t,\x) \x\cdot\nabla_x\psi(t,\x) \ d\x.
\end{align}
\label{lem1sec3}
\end{lem}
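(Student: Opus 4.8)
The plan is to obtain the finiteness of the variance, the $C^1$ regularity of $I$ and the identity \eqref{virial1} simultaneously, through a regularized virial computation in which the point-interaction singularity at the origin is absorbed by the quadratic vanishing of the weight $|\x|^2$.

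First I would introduce smooth truncated weights $\theta_m$ with $\theta_m(\x)=|\x|^2$ for $|\x|\le m$, flattened for $|\x|\ge 2m$, chosen so that $\theta_m\uparrow|\x|^2$ and $|\nabla\theta_m|\le C\theta_m^{1/2}$ with $\theta_m,\nabla\theta_m,D^2\theta_m$ bounded. Since $\theta_m$ is bounded and Lipschitz, multiplication by $\theta_m$ sends $\DD^{\frac12}$ into $H^1\subset\DD^{\frac12}$ — if $\psi=\phi^\lambda+qG^\lambda$ then $\theta_m\psi=\theta_m\phi^\lambda+q\,\theta_mG^\lambda$ with $\theta_mG^\lambda$ continuous and of $L^2$ gradient near $\0$ — so $\langle\psi,\theta_m\psi\rangle$ is a genuine $\DD^{-\frac12}$--$\DD^{\frac12}$ pairing and, since $\psi\in C^1([0,T^*);\DD^{-\frac12})$, the map $t\mapsto\int\theta_m|\psi|^2$ is $C^1$ with
\[
\frac{d}{dt}\int\theta_m|\psi|^2=2\Re\langle\dot\psi,\theta_m\psi\rangle=2\,\Im\langle\HH\psi,\theta_m\psi\rangle ,
\]
the nonlinear term dropping out because $\bar\psi f(\psi)$ is real.

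Next I would compute $\Im\langle\HH\psi,\theta_m\psi\rangle$, working first with $\psi_0\in\DD\cap\Sigma$ (strong solution, $\phi^\lambda(t)\in H^2$) and removing the extra regularity by approximation afterwards. Excising a ball $B_\epsilon$ and using that $\HH\psi=-\Delta\psi$ away from $\0$ (as $(-\Delta+\lambda)G^\lambda=\delta_\0$), integration by parts gives
\[
\Im\langle\HH\psi,\theta_m\psi\rangle=\Im\int_{|\x|>\epsilon}\nabla\psi\cdot\overline{\nabla(\theta_m\psi)}\,d\x-\Im\int_{\partial B_\epsilon}(\partial_\nu\psi)\,\overline{\theta_m\psi}\,dS .
\]
On $\partial B_\epsilon$ one has $\theta_m=\epsilon^2$ while $\partial_\nu\psi\sim q/(2\pi\epsilon)$ comes from the logarithmic part $qG^\lambda$, so the boundary term is $O(\epsilon^2)$ and vanishes: the quadratic weight beats the $\epsilon^{-1}$ blow-up of $\nabla G^\lambda$, and this is exactly where the singular interaction is neutralized. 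Expanding $\nabla(\theta_m\psi)=\psi\nabla\theta_m+\theta_m\nabla\psi$ and discarding the real term $\theta_m|\nabla\psi|^2$ leaves the clean relation $\frac{d}{dt}\int\theta_m|\psi|^2=2\,\Im\int\bar\psi\,\nabla\theta_m\cdot\nabla\psi$, with no residual point-interaction term.

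To propagate $\Sigma$ and pass to the limit I would estimate the right-hand side by splitting $\nabla\psi=\nabla\phi^\lambda+q\nabla G^\lambda$. The regular part gives $C(\int\theta_m|\psi|^2)^{1/2}\|\nabla\phi^\lambda\|_2$ with $\|\nabla\phi^\lambda\|_2\lesssim\|\psi\|_{\DD^{\frac12}}$ bounded on each $[0,T']$, $T'<T^*$; for the singular part $\nabla\theta_m\cdot\nabla G^\lambda$ is bounded near $\0$ (since $\x\cdot\nabla G^\lambda=r\,\partial_rG^\lambda$ stays bounded there) and $L^2$ uniformly in $m$ by the exponential decay in \eqref{asymG}, contributing $\le C\|\psi_0\|_2$ via mass conservation. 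Hence $y_m:=\int\theta_m|\psi|^2$ obeys $|y_m'|\le C y_m^{1/2}+C$ with $y_m(0)\le\|\x\psi_0\|_2^2$ uniformly in $m$, so $y_m$ is bounded on $[0,T']$ uniformly in $m$ and monotone convergence yields $\x\psi(t)\in L^2$, i.e.\ $\psi\in C([0,T^*);\Sigma)$. Letting $m\to\infty$ in the integrated identity, dominated convergence — the integrand being controlled near $\0$ by boundedness of $\x\cdot\nabla G^\lambda$ and at infinity by $\||\x|\psi\|_2\,\|\nabla\psi\|_{L^2(|\x|>1)}$ — produces \eqref{virial1}, whose right-hand side is continuous in $t$, so $I\in C^1$. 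A final approximation of $\psi_0\in\Sigma$ by data in $\DD\cap\Sigma$, together with continuous dependence and the uniform variance bounds, removes the regularity assumption.

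The main obstacle is the singular component $qG^\lambda$: because $\nabla G^\lambda\notin L^2$ near $\0$, none of these integrals can be bounded by naively separating $|\x|$ from $\nabla\psi$, and the whole scheme rests on the two compensating facts that the weight vanishes quadratically at $\0$ (killing both the boundary term above and the $\delta_\0$ in $\HH\psi=-\Delta\psi-q\delta_\0$) and that the radial derivative $\x\cdot\nabla G^\lambda$ stays bounded there — with the real work being to make every estimate uniform in $m$ simultaneously near $\0$ and at infinity.
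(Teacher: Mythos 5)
Your proposal is correct and follows essentially the same strategy as the paper's proof: a regularized variance whose weight vanishes quadratically at the origin, the key observation that this quadratic vanishing neutralizes the point interaction (with $\x\cdot\nabla G^\lambda$ bounded near $\0$ and square-integrable globally, so the singular part only contributes a controlled term), a Gr\"onwall-type bound uniform in the regularization parameter, and passage to the limit by monotone/dominated convergence to get both $\psi\in C([0,T^*);\Sigma)$ and \eqref{virial1}. The differences are implementational: the paper damps with the Gaussian $e^{-\epsilon|\x|^2}$ and works directly with weak solutions by moving $\HH$ onto the weighted function in the $\DD^{-\frac{1}{2}}$--$\DD^{\frac{1}{2}}$ pairing (where it acts as $-\Delta$ because the weighted function carries zero charge), whereas you truncate with $\theta_m$, excise a ball around the origin at the strong-solution level and recover the weak case by density and continuous dependence --- an extra approximation layer that the paper only invokes later, in the proof of Lemma \ref{lem2sec3}, and whose details (density of $\DD\cap\Sigma$ and stability of the variance bounds) you correctly flag as the remaining routine work.
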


\begin{proof}
We firstly show that $t\mapsto \x\psi(t, \x)\in C^0(\left[ 0, T^* \right); L^2(\RE^2))$. Let $ \chi_\epsilon \in \mathcal S(\RE^2)\ ,$ $\chi_\epsilon(x)=e^{-\epsilon|\x|^2}\ $  and define a regularized variance
\[
 I_\epsilon (t) :=  \int_{\RE^2} \abs{\x \chi_\epsilon \psi (t,\x)}^2 \ d\x.
\]
Let $\psi_0 \in \Sigma $ and $ \psi \in C([0,T^*), \DD^{\frac{1}{2}} )\cap C^1(\left[ 0, T^* \right); \DD^{-{\frac{1}{2}}}(\RE^2)) $ the weak solution of the \eqref{cauchy}. One has $\x \chi_\epsilon \psi \in  C([0,T^*), \DD^{\frac{1}{2}}) $  and for any $t\in [0 ,T^*)$ we have
\\
\begin{align*}
& \frac{d}{dt} \ \ \int_{\RE^2} \abs{\x \chi_\epsilon \psi (t,\x)}^2 \ d\x  = 2 \Re \langle |\x|^2 \chi_\epsilon^2 \psi, \partial_t \psi\rangle_{-\frac{1}{2}, \frac{1}{2}} \notag \\
&  = 2\Re \langle |\x|^2 \chi_\epsilon^2 \psi,  -i \HH_\alpha \psi -  ig \abs{\psi}^{p-1}\psi\rangle_{-\frac{1}{2}, \frac{1}{2}} \notag \\
&  = 2\Im \langle \HH(|\x|^2 \chi_\epsilon^2 \psi),  \psi \rangle_{-\frac{1}{2}, \frac{1}{2}} \notag \\& 
= 2\Im \langle -\Delta(|\x|^2 \chi_\epsilon^2 \psi),  \psi \rangle_{-\frac{1}{2}, \frac{1}{2}} \notag \\& =
-2 \Im \int_{\RE^2}\psi {\nabla\cdot\nabla(\overline{ |\x|^2 \chi_\epsilon^2 \psi})}\ \ dx  \notag \\&= -2 \Im  \int_{\RE^2} \psi  \nabla\cdot \left[\chi_\epsilon^2(|\x|^2\nabla \overline\psi +2\x  \overline\psi-2\epsilon{\x} |\x|^2\overline\psi)\right]\ d\x\ .
\end{align*}
Now we can integrate by parts noticing that $\x\nabla\psi\in L_{\text{loc}}^2(\RE^2)$ and after suppressing a real term in the integrand we obtain
\begin{align*}
& \frac{d}{dt} \ \ \int_{\RE^2} \abs{\x \chi_\epsilon \psi (t,\x)}^2 \ d\x =4 \Im  \int \chi_\epsilon^2(1-\epsilon|\x|^2)\overline\psi\ \x\cdot\nabla\psi \ d\x
\end{align*}
and integrating in time
\begin{align}
&  I_\epsilon(t) = I_\epsilon(0) + 4 \Im \int_0^t  \int \chi_\epsilon^2(1-\epsilon|\x|^2)\overline\psi\ \x\cdot\nabla\psi\ d\x \ ds
\label{calcoli2}
\end{align}
Notice now that $\x\cdot\nabla\psi=\x\cdot\nabla\phi^\lambda +q\x\cdot\nabla G^{\lambda}$ and taking into account that $\|\nabla \phi^\lambda\|\leq c\|\psi\|_{\DD^{\frac{1}{2}}}$, $\|\x\cdot q\nabla G^{\lambda}\| \leq c\|\psi\|_{\DD^{\frac{1}{2}}}$ one gets
\begin{align*}
 I_\epsilon(t)\leq I_{\epsilon}(0) +c(m)\int_0^t\|\psi(s)\|_{\frac{1}{2}}\ ds + c\int_0^t \|\psi(s)\|_{\frac{1}{2}}I_{\epsilon}^{\frac{1}{2}}(s)\ ds 
\end{align*}
where $c(m)$ is a constant depending on the mass. From Gr\"onwall inequality it follows that there exists a constant $c$ independent on $\epsilon$ such that
 \begin{align*}
 I_\epsilon(t)\leq c\ \ \ \ \     t\in [0,T^*]
\end{align*}
From Fatou lemma one finally concludes that 

\begin{align*}
 I(t)=\int\lim\inf_\epsilon \chi^2_\epsilon |\x|^2 |\psi(t,\x)|^2\ d\x\leq  \lim\inf_\epsilon\int \chi^2_\epsilon |\x|^2 |\psi(t,\x)|^2\ d\x\leq c\ \ \ \ \     t\in [0,T^*]
\end{align*}
which gives $I(t)\in L^{\infty}\ \ \forall t\in [0,T^*]\ ,$ the map $t\mapsto \||\cdot|u(t,\cdot)\|$ is bounded on any $(0,T)$ with $T<T^*$ and consequently weakly continuous as a map $(0,T^*)\to L^2(\RE^2).$ From \eqref{calcoli2}, the fact that $\overline \psi\x\cdot\nabla\psi\in C_tL^1_x$ and the dominated convergence theorem, we also obtain 
\begin{align*}
&  I(t) = I(0) + 4 \Im \int_0^t  \int \overline\psi\ \x\cdot\nabla\psi\ d\x \ ds\ \ \ \ \ \    \forall t\in [0,T^*]
\end{align*}
which gives at once that the $ \psi \in C^0 ( \left[ 0, T^* \right); \Sigma)\ $ and  validity of \eqref{virial1}.
\end{proof}
\noindent
The crucial information is contained in the following lemma
\begin{lem}\label{lem2sec3}[Virial identity]
Let  $\psi_0 \in \Sigma $ and $\psi \in C(\left[ 0, T^* \right);\DD^{\frac{1}{2}})$ the corresponding maximal weak solution of \eqref{cauchy}. 
Then the function
\[ \ t \mapsto I(t)=  \int_{\RE^2} |\x|^2 \  \abs{\psi(t,x)}^2 \ d\x
\]
is in  $C^2(\left[0, T^* \right);\RE)$ and the following identity holds
\begin{align}
\frac{d^2}{dt^2} I (t) &=  16 E(\psi) +8 g \frac{(p-3)}{p+1}  \|\psi(t)\|^{p+1}_{p+1} +\frac{2}{\pi}|q|^2 \nonumber\\
\label{virial2}
&= 8\mathcal F(\psi) +8 g \frac{(p-1)}{p+1} \|\psi(t)\|^{p+1}_{p+1} +\frac{2}{\pi}|q|^2 = 8Q(\psi)
\end{align}
where 
\beq\label{defQ}
Q(\psi) := \mathcal F(\psi) +g \frac{(p-1)}{p+1} \|\psi(t)\|^{p+1}_{p+1} +\frac{1}{4\pi}|q|^2\ .
\eeq
\end{lem}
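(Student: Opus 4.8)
The plan is to establish the virial identity \eqref{virial2} by computing the second time derivative of the variance $I(t)$. From Lemma \ref{lem1sec3} we already know that $I \in C^1$ with $\frac{d}{dt}I(t) = 4\Im\int \overline\psi\,\x\cdot\nabla\psi\,d\x$, so the task reduces to differentiating this first-order expression once more in $t$ and identifying the result with $8Q(\psi)$. First I would again work with the regularized cutoff $\chi_\epsilon(\x)=e^{-\epsilon|\x|^2}$ to stay inside the energy domain where the duality pairing $\langle\cdot,\cdot\rangle_{-\frac12,\frac12}$ makes sense and where $\partial_t\psi$ can legitimately be replaced by $-i\HH\psi - ig|\psi|^{p-1}\psi$; I would differentiate $4\Im\int\chi_\epsilon^2(1-\epsilon|\x|^2)\overline\psi\,\x\cdot\nabla\psi\,d\x$ in time, substitute the equation, and only at the end pass $\epsilon\to 0$ using the $\Sigma$-regularity just secured in Lemma \ref{lem1sec3} together with dominated convergence.

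The heart of the computation is the commutator-type bracket. Writing $A = \x\cdot\nabla + \nabla\cdot\x$ (twice the generator of dilations, up to factors) and tracking $\frac{d}{dt}\langle \psi, A\psi\rangle$, the linear part produces the commutator $[\HH,A]$ and the nonlinear part produces a term proportional to $g\frac{p-1}{p+1}\|\psi\|_{p+1}^{p+1}$, exactly as in the classical case treated in \cite{caz}, Section 8.2. On the smooth part $\phi^\lambda$ the operator $\HH$ acts as $-\Delta$, so the purely Laplacian contribution reconstructs the standard $8\mathcal F$-type terms after integration by parts. The essential new feature, and what I expect to be the main obstacle, is the \emph{singular part} $qG^\lambda$: since $\HH\psi = -\Delta\phi^\lambda - \lambda q G^\lambda$ and $G^\lambda$ has the logarithmic singularity \eqref{asymG} at the origin, the dilation generator $\x\cdot\nabla$ applied near $\x=0$ interacts with this singularity and yields the anomalous boundary term $\frac{2}{\pi}|q|^2$. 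Concretely I would isolate a small ball $B_\delta(\0)$, integrate by parts there, and extract the contribution from the logarithmic singularity of $G^\lambda$; the coefficient $\frac{2}{\pi}$ (equivalently $\frac{1}{4\pi}$ in $Q$) should emerge from the residue of $\x\cdot\nabla G^\lambda \sim -\frac{1}{2\pi}$ near the origin paired against the delta-type source, the precise constant being fixed by the normalization in \eqref{Glambda}.

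Once the raw identity $\frac{d^2}{dt^2}I(t) = 8\mathcal F(\psi) + 8g\frac{p-1}{p+1}\|\psi\|_{p+1}^{p+1} + \frac{2}{\pi}|q|^2$ is in hand, the two displayed forms in \eqref{virial2} follow by purely algebraic manipulation: recalling $E(\psi) = \frac12\mathcal F(\psi) + \frac{g}{p+1}\|\psi\|_{p+1}^{p+1}$ from \eqref{energy}, I would rewrite $8\mathcal F = 16E - \frac{16g}{p+1}\|\psi\|_{p+1}^{p+1}$ and combine with the nonlinear term to obtain the coefficient $8g\frac{p-3}{p+1}$ in the first line, while grouping all terms against $Q$ as defined in \eqref{defQ} yields the final $=8Q(\psi)$. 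The $C^2$ regularity of $I$ is then a consequence of the continuity in $t$ of each term on the right-hand side, which follows from $\psi\in C([0,T^*);\DD^{\frac12})$, conservation of mass and energy, and the Sobolev embedding controlling $\|\psi\|_{p+1}^{p+1}$. The delicate points throughout are justifying the integrations by parts and the $\epsilon\to 0$ limit in the presence of the non-$H^2$ singular part, for which the uniform bound on $I_\epsilon$ and the $L^2_{\mathrm{loc}}$ control of $\x\cdot\nabla\psi$ from Lemma \ref{lem1sec3} are exactly the tools required.
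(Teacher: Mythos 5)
Your overall skeleton coincides with the paper's: regularize $\dot I$ with $\chi_\epsilon=e^{-\epsilon|\x|^2}$, substitute the equation, split $\psi=\phi^\lambda+qG^\lambda$, pass $\epsilon\to 0$, and finish with the algebra relating $\mathcal F$, $E$ and $Q$ (that last part, and the $C^2$ claim, are fine). The genuine gap is in the one step that is actually new here, the extraction of the anomalous term $\frac{2}{\pi}|q|^2$. The paper never excises a ball around the origin and never invokes a delta-type source: it writes $\HH\psi=-\Delta\phi^\lambda-\lambda qG^\lambda$, shows the regular--regular term vanishes (a purely two-dimensional fact, since the bulk term $(2-n)|\nabla\phi^\lambda|^2$ is absent for $n=2$), shows the cross terms vanish using that $\x\cdot\nabla G^\lambda\in H^2(\RE^2)$ and $\x G^\lambda\in H^1(\RE^2)$ (the Fourier identities \eqref{Fourier}), and then computes the singular--singular term \emph{exactly} by Plancherel: $-2\lambda|q|^2\int G^\lambda\,\x\cdot\nabla G^\lambda\,d\x=\frac{2\lambda^2|q|^2}{\pi}\int_0^\infty\frac{r\,dr}{(r^2+\lambda)^3}=\frac{1}{2\pi}|q|^2$. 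Note this is a \emph{bulk} effect, not a boundary residue: equivalently $\int G^\lambda\,\x\cdot\nabla G^\lambda=-\|G^\lambda\|^2$, which is finite and nonzero because $G^\lambda\in L^2$ even though $G^\lambda\notin H^1(\RE^2)$. Your proposed mechanism is also quantitatively off: pairing the distributional source $q\delta_\0$ (from $\HH\psi=-\Delta\psi-q\delta_\0$) with $(\x\cdot\nabla G^\lambda)(\0^+)=-\frac{1}{2\pi}$ yields a contribution of $\frac{1}{\pi}|q|^2$ at the level of $\frac{1}{4}\ddot I$, \emph{twice} the correct $\frac{1}{2\pi}|q|^2$; the excess must be cancelled by $O(1)$ boundary terms on $\partial B_\delta$ produced by the Laplacian bulk integral, which cannot be integrated by parts globally precisely because $\nabla\psi\notin L^2$. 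Since your sketch does not track these compensating divergent pieces, the constant in \eqref{virial2} is not actually derived by your argument; the Fourier route is what makes the computation clean and exact.

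There is a second, functional-analytic gap: you propose to substitute $\partial_t\psi=-i\HH\psi-ig|\psi|^{p-1}\psi$ directly at the weak level, but after the second differentiation the integrand $\overline{\HH\psi}\,\x\cdot\nabla\psi$ is \emph{not} a $\DD^{-\frac{1}{2}}\times\DD^{\frac{1}{2}}$ duality pairing, because $\x\cdot\nabla\psi\notin\DD^{\frac{1}{2}}$ in general. The paper handles this with a two-step regularization you omit: first a density argument in $C^1([0,T^*);\DD^{\frac{1}{2}})$ to justify differentiating $h_\epsilon$ and integrating by parts, then restriction to $\psi_0\in\Sigma\cap\DD$ so that the equation holds pointwise in $L^2$ and the term-by-term limit $I+II+III+IV$ makes sense; the identity for weak solutions is only recovered at the very end by continuous dependence and density. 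Without this detour through strong solutions, the substitution step in your plan is not justified.
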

\begin{proof}
Let us show the result first assuming that $\psi_0\in \Sigma\cap \DD$ and considering the corresponding strong solution $\psi\in C(\left[ 0, T^* \right);\DD)\cap C^1(\left[ 0, T^* \right);L^2(\RE^2))$. We need to derive in time the r.h.s. of \eqref{virial1}. We regularize it writing
\begin{align}\label{accaeps}
h_{\epsilon}(t):=\Im\int_{\RE^2}e^{-\epsilon|\x|^2}\ \overline \psi\ \x\cdot\nabla\psi\ d\x\ .
\end{align}
Admitting that $\psi\in C^1([0,T^*),\DD^{\frac{1}{2}})\ $ one can safely derive in time \eqref{accaeps}, obtaining 
\begin{align*}
\dot h_{\epsilon}(t):=\Im\int_{\RE^2}e^{-\epsilon|\x|^2}\ \dot{\overline\psi}\ \x\cdot\nabla\psi\ d\x\ + \Im\int_{\RE^2}e^{-\epsilon|\x|^2}\ \overline \psi\ \x\cdot\nabla\dot\psi\ d\x\ .
\end{align*}
Both addenda are well defined, 
and more precisely the map $t\mapsto e^{-\epsilon|\x|^2}\  \x\cdot\nabla\psi$ is in $C^1([0,T^*),L^2(\RE^2))$ because
\begin{align*}
\|e^{-\epsilon|\x|^2}\ \x\cdot\nabla\psi \|=\|e^{-\epsilon|\x|^2}\x\cdot\nabla\phi+ e^{-\epsilon|\x|^2}\ q(t)\x\cdot \nabla G^\lambda \|\leq c_{\epsilon}\left(\|\nabla \phi\|+|q(t)|\right)\leq \|\psi\|_{\DD^{\frac{1}{2}}}\ ,
\end{align*}
Now, integrating by part the second addendum one has 
\begin{align}\label{accaepspunto2}
\dot h_{\epsilon}(t):=\Im\left\{\int_{\RE^2}e^{-\epsilon|\x|^2}\left( \dot{\overline\psi}\ \x\cdot\nabla\psi-\dot\psi\ \x\cdot\nabla\overline \psi\right) d\x\ -2\int_{\RE^2}e^{-\epsilon|\x|^2}\left(\overline \psi\ \dot\psi -\epsilon|\x|^2\overline \psi\dot\psi\right) d\x\ \right\} 
\end{align}
and the r.h.s. is well defined and continuous in time only assuming  $\psi\in C^1([0,T^*), \DD^{\frac{1}{2}})$. By density of $ C^1([0,T^*),\DD^{\frac{1}{2}})\ $ in $ C([0,T^*),\DD^{\frac{1}{2}})\cap C^1([0,T^*),L^2)$ (which is proven as in the case of standard Sobolev case), formula \eqref{accaepspunto2} still holds in these hypotheses. Now we perform a second regularization considering $\psi\in C^0([0,T^*),\DD)\cap C^1([0,T^*),L^2),$ so that we can apply the equation in strong form. From \eqref{accaepspunto2}, by using $\Im z=-\Im\overline z$ and then the equation in \eqref{cauchy} one obtains
\begin{align*}
\dot h_{\epsilon}(t)&=2\Im\int_{\RE^2}e^{-\epsilon|\x|^2} \dot{\overline\psi}\left(\x\cdot\nabla\psi+\psi \right) d\x\ +2\Im \epsilon \int_{\RE^2}e^{-\epsilon|\x|^2}
|\x|^2\overline \psi\dot\psi\ d\x\  \\
&=2\Im  \int_{\RE^2}e^{-\epsilon|\x|^2} i\left(\overline{\HH\psi+f(\psi)}\right)\left(\x\cdot\nabla\psi+\psi \right) d\x\ +2\epsilon\ \Im  (-i)\int_{\RE^2}e^{-\epsilon|\x|^2}
|\x|^2\overline \psi\left(\HH\psi+f(\psi) \right)\ d\x\\
&=2\Re\ \int_{\RE^2}e^{-\epsilon|\x|^2} \left(\overline{\HH\psi+f(\psi)}\right)\psi\ d\x - 2\epsilon \Re\ \int_{\RE^2}e^{-\epsilon|\x|^2} |\x|^2\overline \psi\left({\HH\psi+f(\psi)}\right)\ d\x\ + \\
&\ \ \ \ 2 \Re\int_{\RE^2}e^{-\epsilon|\x|^2} \overline{\HH\psi}\left(\x\cdot\nabla\psi \right)\ d\x + 2 \Re\int_{\RE^2}e^{-\epsilon|\x|^2} \overline{f(\psi)}\left(\x\cdot\nabla \psi \right)\ d\x = I + II + III + IV
\end{align*}
Thanks to the dominated convergence theorem the term $I$ converges to 
\begin{align}
2\left(\langle \HH\psi,\psi \rangle +g \|\psi\|_{p+1}^{p+1}\right)=&  4E(\psi) -4\int_{\RE^2} F(\psi) \ d\x +2(p+1)\int_{\RE^2} F(\psi)\ d\x \nonumber \\
\label{A} =&4E(\psi)+2(p-1)\int_{\RE^2} F(\psi)\ d\x \end{align}
where we have denoted 
\begin{align*}
F(\psi)=\frac{g}{p+1}|\psi|^{p+1}\ .
\end{align*}
The term $II$ is vanishing and now let us consider $III$ and $IV$. To treat $IV$ we make use of the identity
$$
2\Re\ e^{-\epsilon |\x|^2}\overline{f(\psi)}\ \x\cdot\nabla \psi\ =\ \nabla\cdot(2\x e^{-\epsilon |\x|^2}F(\psi)) + 4\epsilon |\x|^2e^{-\epsilon |\x|^2}F(\psi) - 4 e^{-\epsilon |\x|^2}F(\psi)$$
and it follows by the divergence theorem and dominated convergence that
\begin{align}\label{B}
 2 \Re\int_{\RE^2}e^{-\epsilon|\x|^2} \overline{f(\psi)}\x\cdot\nabla\psi\ d\x =  \int_{\RE^2} \left( 4\epsilon |\x|^2e^{-\epsilon |\x|^2}F(\psi)  - 4 e^{-\epsilon |\x|^2}F(\psi) \right)\ d\x \longrightarrow - 4 \int_{\RE^2}F(\psi) \ d\x
 \end{align}
For $III$ we preliminarily decompose the domain element in regular and singular part, obtaining
\begin{align*}
&2\Re \int_{\RE^2} e^{-\epsilon|\x|^2}\overline{\HH\psi}\ \x\cdot\nabla\psi\ d\x=\\ &2\Re \int_{\RE^2} e^{-\epsilon|\x|^2}\left(-\overline{\Delta\phi^\lambda}\ \x\cdot\nabla\phi^\lambda-\lambda\ \overline {q} G^{\lambda}\x\cdot\nabla\phi^\lambda- q\overline{\Delta\phi^\lambda}\x\cdot\nabla G^{\lambda}- \lambda |q|^2 G^{\lambda} \x\cdot\nabla G^{\lambda} \right) d\x  =\\ &III_a +III_b +III_c +III_d
\end{align*}
Now we treat separately the various addenda. Integrating by parts $III_a$ one has
\begin{align*}
III_a=&2\Re \int_{\RE^2} (-\overline{\Delta\phi^\lambda})\ e^{-\epsilon|\x|^2}\x\cdot\nabla\phi^\lambda \ d\x=2\Re\int_{\RE^2} \overline{\nabla\phi^\lambda}\cdot\nabla(e^{-\epsilon|\x|^2} \x\cdot \nabla\phi^\lambda)\ d\x = \\ & 2\Re\int_{\RE^2} \overline{\nabla\phi^\lambda}\cdot\nabla(\x\cdot \nabla\phi^\lambda) e^{-\epsilon|\x|^2}\ d\x -4\epsilon \Re \int_{\RE^2} e^{-\epsilon|\x|^2} |\x\cdot \nabla \phi^\lambda|^2\ d\x
\end{align*}
The second term vanishes by dominated convergence and the first term vanishes as well exploiting the following identity, which holds true in the two dimensional case, 
\begin{align*}
2\Re\ e^{-\epsilon|\x|^2}\overline{\nabla\phi^\lambda}\cdot\nabla(\x\cdot \nabla\phi^\lambda)  =2\epsilon|\x|^2 e^{-\epsilon|\x|^2}|\nabla\phi^\lambda|^2 + \nabla\cdot(\x e^{-\epsilon|\x|^2}|\nabla \phi^\lambda|^2)\ ,
\end{align*}
and then integrating and applying the divergence theorem and dominated convergence again.	\\
To proceed, let us note preliminary the following identities easily obtained by Fourier transform (where formula \eqref{Glambda} is used and it is essential the dimension $2$ in the first):
\begin{align}\label{Fourier}
\mathscr F(\x\cdot\nabla G^\lambda)=&-\frac{1}{2\pi}\nabla\cdot \frac{\k}{(|\k|^2+\lambda)}=-\frac{1}{2\pi} \frac{2\lambda}{(|\k|^2+\lambda)^2} \\
\mathscr F(\nabla\cdot\x G^\lambda)=&-\frac{1}{2\pi}\k\cdot\nabla\frac{1}{|\k|^2+\lambda}=\frac{1}{2\pi} \frac{2|\k|^2}{(|\k|^2+\lambda)^2}
\end{align}
In particular one sees that $\x\cdot\nabla G^\lambda \in H^2(\RE^2)$ and $\x G^{\lambda} \in H^1(\RE^2)$, and we can integrate by parts in
\begin{align*}
III_b +III_c= &2\Re \int_{\RE^2} e^{-\epsilon|\x|^2} \left(-\lambda\ \overline {q} G^{\lambda}\x\cdot\nabla\phi^\lambda- q\overline{\Delta\phi^\lambda}\x\cdot\nabla G^{\lambda}\right) d\x \\ =& 2\Re\int_{\RE^2} \left(\lambda G^\lambda \overline q \phi^\lambda \x\cdot\nabla e^{-\epsilon|x|^2} - q\overline\phi^{\lambda} \x\cdot\nabla G^\lambda  (\Delta e^{-\epsilon|\x|^2}) -2q\overline\phi^{\lambda}\nabla(\x\cdot\nabla G^\lambda)\cdot\nabla e^{-\epsilon|\x|^2} \right) \ d\x \\ +&2\Re\int_{\RE^2} e^{-\epsilon |\x|^2}(\lambda\overline q \phi^\lambda \nabla\cdot(\x G^\lambda) -q\overline \phi^\lambda\Delta(\x\cdot\nabla G^\lambda))\ d\x
\end{align*}
The last integral identically vanishes and from anyone of the terms in the first integral can be extracted a factor $\epsilon$; one concludes that $III_b +III_c \longrightarrow 0$ by dominated convergence.\\
It remains to consider the limit for $\epsilon \to 0$ of $III_d$, which can be computed explicitly thanks to the Plancherel theorem and identities \eqref{Glambda} and \eqref{Fourier}:
\begin{align}
III_d=&2\Re \int_{\RE^2} e^{-\epsilon|\x|^2}\left(-\lambda |q|^2 G^{\lambda} \x\cdot\nabla G^{\lambda} \right) d\x  \longrightarrow -2\lambda |q|^2 \Re \int_{\RE^2}  G^{\lambda} \x\cdot\nabla G^{\lambda} d\x \nonumber \\
=& \frac{2\lambda^2|q|^2}{\pi}\int_0^\infty\frac{r}{(r^2+\lambda)^3}\ dr=\frac{1}{2\pi}|q|^2\label{C} .
\end{align}
Finally, collecting \eqref{A},\eqref{B}, \eqref{C} and taking into account that the other terms involved vanish, we obtain
\begin{align*}
\ddot I(t) = & 4\lim_{\epsilon\to 0}\dot h_\epsilon (t)= 16 E(\psi)  +(8p-24)\int_{\RE^2}F(\psi)\ d\x + \frac{4}{\pi}|q|^2\\
=&16 E(\psi) + 8g\frac{p-3}{p+1}\int_{\RE^2}|\psi|^{p+1}\ d\x + \frac{2}{\pi}|q|^2\\
=& 8\mathcal F(\psi) + 8g \frac{(p-1)}{p+1} \|\psi(t)\|^{p+1}_{p+1} +\frac{2}{\pi}|q|^2 \end{align*}
Having proved the identity \eqref{virial2} for strong solutions, the same identity follows for weak solutions exploiting continuous dependence and density,
and this ends the proof of the Lemma.
\end{proof}
\subsection{Mass preserving scaling and its properties}
From now on, we will only consider the attractive nonlinearity, i.e. the case $g=-1$.
\begin{dfn}
Let us introduce the mass preserving scaling map $T^\sigma:L^2(\RE^2)\rightarrow L^2(\RE^2)$
\begin{align*}
T^\sigma(\psi)\equiv\psi^{\sigma}(\x)=\sigma\psi(\sigma \x)\qquad\ \ \forall \psi \in L^2(\RE^2)
\end{align*}
\end{dfn}
\noindent
\begin{remark}
By using 
$G^{\lambda}(\sigma\x)=\frac{1}{2\pi}K_0(\sqrt\lambda \sigma|x|)=\frac{1}{2\pi}K_0(\sqrt{\lambda\sigma^2}|x|)=G^{\lambda\sigma^2}(\x):=G^{\lambda^\sigma}(\x)\ $ one obtains
\begin{equation*}
\psi^\sigma(\x)=\sigma \phi(\sigma\x)+q \sigma G^\lambda(\sigma\x)=\sigma \phi(\sigma\x)+ \sigma qG^{\lambda\sigma^2}(\x)=\phi^\sigma(\x) +q^\sigma G^{\lambda^\sigma}(\x)
\end{equation*}
and the map $\psi\to \psi^\sigma$ leaves invariant $\DD^{\frac{1}{2}}\ $ (with the same $\alpha$). It also follows that $q^\sigma=\sigma q\ .$\\ 
\end{remark}
\begin{remark}\label{dominiodil} 
One has $G^\lambda-G^{\lambda\sigma^2}\in H^{3-\epsilon}\ \ \forall \epsilon>0$ 
and exploiting the first asymptotic relation in \eqref{asymG}, one obtains $(G^{\lambda\sigma^2}-G^\lambda)(\0)=-\frac{1}{2\pi}\log{\sigma}$. From this one concludes that $T^\sigma$ does not preserve $\DD_\al\ .$ Instead, $T^\sigma:\DD_\alpha\rightarrow \DD_{\al -\frac{1}{2\pi}\log{\sigma}} .$ In fact, from 
$\psi= \phi^\la   +q \,  G^\lambda ,\, \phi^\la(\0)=\Gamma^\lambda_\alpha q $, it follows 
\begin{align*}\psi^\sigma(\x)&= \sigma\phi^\la(\sigma\x)   +\sigma q \,  G^\lambda(\sigma\x)=\sigma\phi^\la(\sigma\x) + \sigma q(G^{\lambda\sigma^2}-G^\lambda)(\sigma\x) + \sigma q G^\lambda(\x)\\&=(\phi^\lambda)^\sigma(\x) + q^\sigma G^\lambda(\x)
\end{align*}
and $(\phi^\la)^\sigma(\0)=\sigma\phi^\la(\0)- \sigma q\frac{1}{2\pi}\log{\sigma} = \sigma q(\Gamma^\lambda_\alpha - \frac{1}{2\pi}\log\sigma)=q^\sigma\Gamma^\lambda_{\alpha- \frac{1}{2\pi}\log\sigma}\ ,$ hence $\psi^\sigma\in \DD_{\al -\frac{1}{2\pi}\log{\sigma}}\ .$
\end{remark}

\begin{prop}\label{Tsigma1} Let $\psi \in \DD^{\frac{1}{2}}$. Then
\begin{align*}
\mathcal F(\psi^{\sigma})&=\sigma^2\mathcal F(\psi) +\frac{|q|^2}{2\pi}\sigma^2\log\sigma \ \\
 \|\psi^{\sigma}\|^{p+1}_{p+1}&=\sigma^{p-1} \|\psi\|^{p+1}_{p+1}\\ 
\frac{d}{d\sigma}\mathcal F(\psi^{\sigma})\vert_{\sigma=1}&=2\mathcal F(\psi) +\frac{1}{2\pi}|q|^2\qquad\qquad   \\
\frac{d}{d\sigma} \|\psi^{\sigma}\|^{p+1}_{p+1}\vert_{\sigma=1}&=(p-1) \|\psi\|^{p+1}_{p+1}
\qquad \qquad \quad\\
\frac{d}{d\sigma}E(\psi^\sigma)\vert_{\sigma=1}&=\frac{d}{d\sigma}S(\psi^\sigma)\vert_{\sigma=1}=Q(\psi) \ .
\end{align*}
 
\end{prop}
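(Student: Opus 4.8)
The plan is to compute each scaling identity directly from the definitions, using the decomposition $\psi^\sigma = \phi^\sigma + q^\sigma G^{\lambda^\sigma}$ established in the preceding Remark, where $\phi^\sigma(\x) = \sigma\phi^\lambda(\sigma\x)$, $q^\sigma = \sigma q$, and $\lambda^\sigma = \lambda\sigma^2$. The essential point is that the form $\mathcal F$ does not depend on the choice of $\lambda$, so I am free to evaluate $\mathcal F(\psi^\sigma)$ using the $\lambda^\sigma$-decomposition, which is the one that arises naturally under scaling.

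First I would handle the $L^{p+1}$ identity, which is the easy one: a change of variables $\y = \sigma\x$ in $\int |\sigma\psi(\sigma\x)|^{p+1}\,d\x$ gives a factor $\sigma^{p+1}$ from the amplitude and $\sigma^{-2}$ from the Jacobian, yielding $\sigma^{p-1}\|\psi\|_{p+1}^{p+1}$. For the form identity I would write, using $\lambda$-independence evaluated at $\lambda^\sigma$,
\begin{align*}
\mathcal F(\psi^\sigma) = \|\nabla\phi^\sigma\|^2 + \lambda^\sigma\big(\|\phi^\sigma\|^2 - \|\psi^\sigma\|^2\big) + \Gamma^{\lambda^\sigma}_\alpha |q^\sigma|^2.
\end{align*}
A change of variables gives $\|\nabla\phi^\sigma\|^2 = \sigma^2\|\nabla\phi^\lambda\|^2$, $\|\phi^\sigma\|^2 = \|\phi^\lambda\|^2$, and $\|\psi^\sigma\|^2 = \|\psi\|^2$ (mass preservation). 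Thus the first two groups of terms reproduce $\sigma^2\|\nabla\phi^\lambda\|^2 + \lambda\sigma^2(\|\phi^\lambda\|^2 - \|\psi\|^2) = \sigma^2\mathcal F^\lambda(\psi)$. For the boundary term, from the explicit formula for $\Gamma^\lambda_\alpha$ in \eqref{Glambda} one has $\Gamma^{\lambda\sigma^2}_\alpha = \Gamma^\lambda_\alpha + \frac{1}{2\pi}\log\sigma$, and with $|q^\sigma|^2 = \sigma^2|q|^2$ this contributes $\sigma^2\Gamma^\lambda_\alpha|q|^2 + \frac{\sigma^2}{2\pi}|q|^2\log\sigma$. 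Collecting gives $\mathcal F(\psi^\sigma) = \sigma^2\mathcal F(\psi) + \frac{|q|^2}{2\pi}\sigma^2\log\sigma$, as claimed.

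The derivative identities then follow by differentiating at $\sigma=1$. Since $\frac{d}{d\sigma}\big(\frac{|q|^2}{2\pi}\sigma^2\log\sigma\big) = \frac{|q|^2}{2\pi}(2\sigma\log\sigma + \sigma)$, which equals $\frac{|q|^2}{2\pi}$ at $\sigma=1$, and $\frac{d}{d\sigma}\sigma^2\mathcal F(\psi)|_{\sigma=1} = 2\mathcal F(\psi)$, I obtain $\frac{d}{d\sigma}\mathcal F(\psi^\sigma)|_{\sigma=1} = 2\mathcal F(\psi) + \frac{1}{2\pi}|q|^2$. Likewise $\frac{d}{d\sigma}\sigma^{p-1}|_{\sigma=1} = p-1$ gives the $L^{p+1}$ derivative. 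For the last line, since $S(\psi^\sigma) = E(\psi^\sigma) + \frac{\omega}{2}\|\psi^\sigma\|^2$ and the mass is $\sigma$-independent, the $S$ and $E$ derivatives coincide; combining the two derivatives already computed with $E(\psi) = \frac12\mathcal F(\psi) - \frac{1}{p+1}\|\psi\|_{p+1}^{p+1}$ (recalling $g=-1$) yields $\frac{d}{d\sigma}E(\psi^\sigma)|_{\sigma=1} = \mathcal F(\psi) + \frac{1}{4\pi}|q|^2 - \frac{p-1}{p+1}\|\psi\|_{p+1}^{p+1} = Q(\psi)$, matching \eqref{defQ}.

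The only genuine subtlety, and the step I would treat most carefully, is the justification for evaluating $\mathcal F(\psi^\sigma)$ in the $\lambda^\sigma$-decomposition: one must invoke that $\mathcal F_\alpha$ is $\lambda$-independent on its form domain and verify that the scaled regular part $\phi^\sigma$ indeed lies in $H^1$ with the scaled singular coefficient $q^\sigma$, which is exactly the content of the first Remark. The key algebraic input is the identity $\Gamma^{\lambda\sigma^2}_\alpha - \Gamma^\lambda_\alpha = \frac{1}{2\pi}\log\sigma$, which is where the logarithmic (dimension-two) term enters and ultimately produces the anomalous $|q|^2$ contribution distinguishing this virial identity from the unperturbed one.
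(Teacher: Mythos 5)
Your proof is correct and follows essentially the same route as the paper's: both evaluate $\mathcal F(\psi^\sigma)$ in the scaled decomposition $\psi^\sigma=\phi^\sigma+q^\sigma G^{\lambda\sigma^2}$, using the $\lambda$-independence of the quadratic form together with the key identity $\Gamma^{\lambda\sigma^2}_\alpha=\Gamma^{\lambda}_\alpha+\frac{1}{2\pi}\log\sigma$, and then obtain the remaining identities by direct differentiation at $\sigma=1$. Your explicit change-of-variables checks and the verification of the last line with $g=-1$ simply fill in what the paper leaves as ``direct computation without difficulty.''
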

\begin{proof}
From $\psi^\sigma=\sigma \phi(\sigma\x)+ \sigma qG^{\lambda\sigma^2}$ and the identity $\Gamma^{\lambda\sigma^2}=\Gamma^\lambda + \frac{1}{2\pi}\log\sigma $ one obtains immediately $\mathcal F(\psi^{\sigma})=\sigma^2 \| \nabla \phi\|^2 
 +\sigma^2\Gamma^{\lambda\sigma^2}_\alpha |q|^2
\ +\lambda\sigma^2(\|\phi\|^2 -\|\psi\|^2)=\sigma^2\mathcal F(\psi) +\frac{|q|^2}{2\pi}\sigma^2\log\sigma \ .$
The other identities are obtained by direct computation without difficulty.
\end{proof}
For fixed $\varphi_\omega\in\mathcal G$ let us define the functions $\sigma\mapsto {S_\omega(\varphi_\omega^{\sigma})} $ and $\sigma\mapsto {Q_\omega(\varphi_\omega^{\sigma})}$ given by
\begin{align}\label{Ssigma}
S_\omega(\varphi_\omega^{\sigma})&=\frac{\sigma^2}{2}\mathcal F(\varphi_\omega) + \frac{\omega}{2}\|\varphi_\omega\|^2+\frac{\sigma^2}{4\pi}\log\sigma |q_{\omega}|^2 -\frac{1}{p+1}\sigma^{p-1}\|\varphi_\omega\|_{p+1}^{p+1} \\
Q(\varphi^\sigma_\omega)&=\sigma^2\mathcal F(\varphi_\omega) +\frac{\sigma^2}{2\pi}\log\sigma |q_{\omega}|^2 -\frac{p-1}{p+1}\sigma^{p-1}\|\varphi_\omega\|_{p+1}^{p+1} +\frac{\sigma^2}{4\pi}|q_\omega|^2
\end{align} 
It is immediate that the functions $\sigma\mapsto S_\omega(\varphi_\omega^{\sigma})$ and $\sigma\mapsto Q(\varphi^\sigma_\omega)$ belong to $ \in C^\infty (\RE^+)$.\\ Let us now denote, again for fixed $\varphi_\omega \in \mathcal G$,
\beq\label{ABC}
A=\mathcal F(\varphi_\omega)+\frac{1}{4\pi}|q_\omega|^2,\  B=\frac{1}{2\pi}|q_\omega|^2,\  C=\frac{p-1}{p+1}\|\varphi_\omega\|_{p+1}^{p+1}\ .
\eeq  
Then we have
\begin{prop}\label{derivateS} Let 
$\varphi_\omega\in \mathcal G$. Then the following identities hold:
\begin{align}
\label{S'}\frac{d}{d\sigma}S_\omega(\varphi_\omega^{\sigma})=&A\sigma  + B\sigma\log\sigma - C\sigma^{p-2}\\
\label{S''}\frac{d^2}{d\sigma^2}S_\omega(\varphi_\omega^{\sigma})=&(A+B)+B\log\sigma-C(p-2)\sigma^{p-3}\\
\label{S'''}\frac{d^3}{d\sigma^3}S_\omega(\varphi_\omega^{\sigma})=&\frac{B}{\sigma}-C(p-2)(p-3)\sigma^{p-4}\\
\label{A=C}\frac{d}{d\sigma}S_\omega(\varphi_\omega^{\sigma})\vert_{\sigma=1}=&0 \ \text{or equivalently}\ \ A=C\\
\label{Q=0} Q(\varphi_\omega)=&0\\
\label{Qsigma}Q(\varphi^\sigma_\omega)=&\sigma\frac{d}{d\sigma}S_\omega(\varphi_\omega^\sigma)\\ 
\label{Q'} 
\frac{d}{d\sigma}Q(\varphi_\omega^\sigma)=&\frac{d}{d\sigma}S_\omega(\varphi_\omega^{\sigma})+\sigma\frac{d^2}{d\sigma^2}S_\omega(\varphi_\omega^{\sigma})\ .
\end{align}
\end{prop}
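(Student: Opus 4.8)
The plan is to split the seven identities into two groups: a purely computational group consisting of the differentiation formulae \eqref{S'}, \eqref{S''}, \eqref{S'''}, \eqref{Qsigma}, \eqref{Q'}, and a single substantive step \eqref{A=C}--\eqref{Q=0} where the variational character of $\varphi_\omega$ is used. For the first group I would start from the closed expression \eqref{Ssigma} for $S_\omega(\varphi_\omega^\sigma)$ and differentiate in $\sigma$. The only non-elementary piece is the term $\frac{\sigma^2}{4\pi}\log\sigma\,|q_\omega|^2$, whose derivatives are handled by the product rule (so $\frac{d}{d\sigma}(\sigma^2\log\sigma)=2\sigma\log\sigma+\sigma$). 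After differentiating, I would regroup the constant coefficients according to the abbreviations \eqref{ABC}: the combination $\mathcal F(\varphi_\omega)+\frac{1}{4\pi}|q_\omega|^2$ collapses to $A$, the coefficient $\frac{1}{2\pi}|q_\omega|^2$ of $\sigma\log\sigma$ becomes $B$, and $\frac{p-1}{p+1}\|\varphi_\omega\|_{p+1}^{p+1}$ becomes $C$. This bookkeeping turns the first derivative into $A\sigma+B\sigma\log\sigma-C\sigma^{p-2}$, which is \eqref{S'}; two further differentiations give \eqref{S''} and \eqref{S'''} at once, since $A+B$ is constant, $\frac{d}{d\sigma}(B\sigma\log\sigma)=B(\log\sigma+1)$, and the power term differentiates in the obvious way.

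The substantive point is \eqref{A=C} together with \eqref{Q=0}, and this is the only step that is not mere calculus. Evaluating the formula \eqref{S'} at $\sigma=1$ gives $\frac{d}{d\sigma}S_\omega(\varphi_\omega^\sigma)\vert_{\sigma=1}=A-C$, since $\log 1=0$; hence the vanishing of this derivative is \emph{equivalent} to $A=C$, which is the content of \eqref{A=C}. To see that the derivative does vanish, I would recall from the last line of Proposition \ref{Tsigma1} that $\frac{d}{d\sigma}S_\omega(\psi^\sigma)\vert_{\sigma=1}=Q(\psi)$ for every $\psi\in\DD^{\frac{1}{2}}$ (the mass term in $S_\omega$ being $\sigma$-independent because $T^\sigma$ preserves the $L^2$ norm). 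Since $\varphi_\omega\in\mathcal G$ solves the stationary equation \eqref{stationary}, equivalently $S'_\omega(\varphi_\omega)=0$, and since $\sigma\mapsto\varphi_\omega^\sigma$ is a differentiable curve in $\DD^{\frac{1}{2}}$ passing through $\varphi_\omega$ at $\sigma=1$, the chain rule yields $\frac{d}{d\sigma}S_\omega(\varphi_\omega^\sigma)\vert_{\sigma=1}=\langle S'_\omega(\varphi_\omega),\partial_\sigma\varphi_\omega^\sigma\vert_{\sigma=1}\rangle_{-\frac{1}{2},\frac{1}{2}}=0$. Combining the two computations gives simultaneously $Q(\varphi_\omega)=0$, which is \eqref{Q=0}, and $A=C$, which is \eqref{A=C}.

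The last two identities then follow with no new input. For \eqref{Qsigma} I would rewrite the explicit expression for $Q(\varphi_\omega^\sigma)$ recorded together with \eqref{Ssigma} in terms of $A$, $B$, $C$, obtaining $A\sigma^2+B\sigma^2\log\sigma-C\sigma^{p-1}$, and observe that this is exactly $\sigma$ times the expression $A\sigma+B\sigma\log\sigma-C\sigma^{p-2}$ found for $\frac{d}{d\sigma}S_\omega(\varphi_\omega^\sigma)$ in \eqref{S'}; this is \eqref{Qsigma}. Finally \eqref{Q'} is obtained by differentiating \eqref{Qsigma} with the product rule, $\frac{d}{d\sigma}\big(\sigma\,\frac{d}{d\sigma}S_\omega(\varphi_\omega^\sigma)\big)=\frac{d}{d\sigma}S_\omega(\varphi_\omega^\sigma)+\sigma\,\frac{d^2}{d\sigma^2}S_\omega(\varphi_\omega^\sigma)$. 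I expect the only genuine obstacle to be the justification in the previous paragraph that the $\sigma=1$ derivative vanishes, namely that $\partial_\sigma\varphi_\omega^\sigma\vert_{\sigma=1}$ is a legitimate tangent direction in the energy space on which the vanishing differential $S'_\omega(\varphi_\omega)\in\DD^{-\frac{1}{2}}$ acts; once the smoothness of $\sigma\mapsto\varphi_\omega^\sigma$ in $\DD^{\frac{1}{2}}$ (already noted after \eqref{Ssigma}) is invoked, everything else reduces to elementary differentiation and regrouping.
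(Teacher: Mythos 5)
Your proposal is correct and follows essentially the same route as the paper: direct differentiation of the explicit scalar expressions \eqref{Ssigma} regrouped via the constants $A,B,C$ of \eqref{ABC}, the criticality $S'_\omega(\varphi_\omega)=0$ to obtain \eqref{A=C} and \eqref{Q=0} (the paper phrases \eqref{Q=0} as a reformulation of \eqref{A=C}, which is equivalent to your combination of the chain rule with the last identity of Proposition \ref{Tsigma1}), and the explicit $A\sigma^2+B\sigma^2\log\sigma-C\sigma^{p-1}$ computation for \eqref{Qsigma} followed by differentiation for \eqref{Q'}. Your closing remark spelling out why $\partial_\sigma\varphi_\omega^\sigma\vert_{\sigma=1}$ is a legitimate tangent vector in $\DD^{\frac{1}{2}}$ is a justified refinement the paper leaves implicit (what is noted after \eqref{Ssigma} is only smoothness of the scalar maps $\sigma\mapsto S_\omega(\varphi_\omega^\sigma)$, $\sigma\mapsto Q(\varphi_\omega^\sigma)$, not of the curve itself), but it does not change the argument.
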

\begin{proof}
The proof of \eqref{S'}, \eqref{S''}, \eqref{S'''}) is obtained by direct computation of the derivatives taking into account (3.12) and (3.14). Property \eqref{A=C} is obtained just exploiting $\varphi_\omega\in \mathcal G$ i.e. $S'_\omega(\varphi_\omega)=0\ .$ Property \eqref{Q=0} is a reformulation of \eqref{A=C}. Properties \eqref{Qsigma} and \eqref{Q'} are based on the previously proven identities. For \eqref{Qsigma},
\begin{align*}
\sigma\frac{d}{d\sigma}S_\omega(\varphi_\omega^\sigma)&=\sigma^2\mathcal F(\varphi_\omega) + \frac{\sigma^2}{2\pi}\log\sigma |q_{\varphi_\omega}|^2 + \frac{\sigma^2}{4\pi} |q_{\varphi_\omega}|^2 -\frac{p-1}{p+1}\sigma^{p-1}\|\varphi_\omega\|_{p+1}^{p+1}=Q(\varphi^\sigma_\omega)\ .
\end{align*}
Identity \eqref{Q'} is obtained by deriving \eqref{Qsigma}.
\end{proof}
\begin{remark} In the previous proposition, properties \eqref{S'}, \eqref{S''}, \eqref{S'''}, \eqref{Qsigma}, \eqref{Q'} do not depend from $\varphi_\omega$ being a stationary state and they hold for every $\varphi\in \DD^{\frac{1}{2}}\ .$
\end{remark}
\subsection{Blow-up and strong instability}
\vskip3pt\noindent The next result is crucial for the analysis.
\begin{lem}\label{mainlemma1} 
Let $p>3$, $\varphi\in \DD^{\frac{1}{2}}\ ,$ $\varphi\neq 0$, $E(\varphi)\geq 0\ ,$ $Q(\varphi)\leq 0$ and $\varphi_\omega\in \mathcal G$; then
\begin{align*}
  S_{\omega}(\varphi_\omega)<S_\omega(\varphi)-\frac{1}{2}Q(\varphi)\ .
\end{align*}
\end{lem}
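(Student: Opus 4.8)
The plan is to run a Berestycki--Cazenave type scaling argument based on the mass--preserving dilation $T^\sigma$ and Proposition \ref{variational}. Write $\varphi^\sigma:=T^\sigma\varphi$ and set $j(\sigma):=S_\omega(\varphi^\sigma)$. By \eqref{Qsigma} (which holds for every $\varphi\in\DD^{\frac12}$ by the Remark after Proposition \ref{derivateS}) one has $Q(\varphi^\sigma)=\sigma\,j'(\sigma)$, so $j'(1)=Q(\varphi)\le0$ and the claim is equivalent to $S_\omega(\varphi_\omega)<R(1)$, where $R(\sigma):=j(\sigma)-\tfrac12 Q(\varphi^\sigma)=j(\sigma)-\tfrac{\sigma}{2}j'(\sigma)$ and $R(1)=S_\omega(\varphi)-\tfrac12Q(\varphi)$. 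Substituting the scaling formulas of Proposition \ref{Tsigma1} into $j$ and forming $R$, the logarithmic (point--interaction) terms cancel and one is left with
\[
R(\sigma)=\tfrac{\omega}{2}\|\varphi\|^2-\tfrac{|q|^2}{8\pi}\,\sigma^2+\tfrac{p-3}{2(p+1)}\|\varphi\|_{p+1}^{p+1}\,\sigma^{p-1}.
\]

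First I would get the lower bound $S_\omega(\varphi_\omega)\le\sup_{\sigma>0}j(\sigma)$. Since $\|\varphi^\sigma\|_{p+1}^{p+1}=\sigma^{p-1}\|\varphi\|_{p+1}^{p+1}$ runs over all of $(0,\infty)$, there is a unique $\sigma_0$ with $\|\varphi^{\sigma_0}\|_{p+1}^{p+1}=\|\varphi_\omega\|_{p+1}^{p+1}$, and Proposition \ref{variational}(b) gives $S_\omega(\varphi_\omega)\le S_\omega(\varphi^{\sigma_0})=j(\sigma_0)\le\sup_\sigma j$. I then claim the supremum is attained at a point $\sigma_1\le1$. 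Writing $j'(\sigma)=-\sigma\,\Phi(\sigma)$ with
\[
\Phi(\sigma)=\tfrac{p-1}{p+1}\|\varphi\|_{p+1}^{p+1}\,\sigma^{p-3}-\mathcal F(\varphi)-\tfrac{|q|^2}{4\pi}-\tfrac{|q|^2}{2\pi}\log\sigma,
\]
one sees from $\Phi'(\sigma)=\sigma^{-1}\big(\tfrac{(p-1)(p-3)}{p+1}\|\varphi\|_{p+1}^{p+1}\sigma^{p-3}-\tfrac{|q|^2}{2\pi}\big)$ that $\Phi$ decreases then increases, with a unique minimum at some $\sigma_c$; hence the interior maximizer $\sigma_1$ of $j$ is the larger zero of $\Phi$ and $\sigma_1>\sigma_c$. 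The hypothesis $E(\varphi)\ge0$ enters precisely here: since $j(1)-\lim_{\sigma\to0^+}j(\sigma)=E(\varphi)\ge0$, the function $j$ cannot be strictly decreasing on all of $(0,1]$, which excludes $\sigma_1>1$ and yields $\sigma_c<\sigma_1\le1$.

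To conclude I would compare $R$ at $\sigma_1$ and at $1$. Solving $R'=0$ gives the same critical point $\sigma_c$, so $R$ is increasing on $[\sigma_c,\infty)\supseteq[\sigma_1,1]$. Because $j'(\sigma_1)=0$ gives $R(\sigma_1)=j(\sigma_1)=\max_\sigma j$, I obtain the chain
\[
S_\omega(\varphi_\omega)\le j(\sigma_0)\le\max_\sigma j=R(\sigma_1)\le R(1)=S_\omega(\varphi)-\tfrac12Q(\varphi).
\]
When $Q(\varphi)<0$ one has $\Phi(1)=-Q(\varphi)>0$, so $\sigma_1<1$, and the strict monotonicity of $R$ on $(\sigma_c,\infty)$ upgrades the last inequality to $R(\sigma_1)<R(1)$, giving the strict conclusion.

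The main obstacle I anticipate is the shape analysis of $j$. The logarithmic term $-\tfrac{|q|^2}{2\pi}\log\sigma$ produced by the point interaction makes $\Phi$ non-polynomial, so there is no convexity shortcut and one must establish the uniqueness of the minimum of $\Phi$ and the bracketing of its zeros directly; the saving grace is the exact cancellation of the logarithms in $R$. A second delicate point is strictness in the borderline case $Q(\varphi)=0$: there $\sigma_1=1$ and $R(\sigma_1)=R(1)$, so strictness must instead come from the variational step, using that equality in Proposition \ref{variational}(b) forces $\varphi^{\sigma_0}$ to be a ground state; this degenerate situation is anyway avoided in the applications, where $Q<0$ (as in Theorem \ref{blowup1}).
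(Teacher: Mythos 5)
Your argument is correct in the regime it targets and follows the same Berestycki--Cazenave skeleton as the paper: the matching scale $\sigma_0$, Proposition \ref{variational}(b) to bound $S_\omega(\varphi_\omega)$ by $S_\omega(\varphi^{\sigma_0})$, one-dimensional calculus along the dilation flow, with $E(\varphi)\geq 0$ used through the comparison $j(1)-j(0^+)=E(\varphi)$ and $p>3$ controlling $\sigma\to\infty$. The genuinely different ingredient is the auxiliary function. The paper freezes the virial functional and studies $g(\sigma)=S_\omega(\varphi^\sigma)-\frac{\sigma^2}{2}Q(\varphi)$, in which the $\mathcal F(\varphi)$ terms cancel but the $\sigma^2\log\sigma$ term survives; it then shows $\sigma=1$ is the absolute maximum of $g$, so that $S_\omega(\varphi_\omega)\leq S_\omega(\varphi^{\sigma_0})\leq g(\sigma_0)\leq g(1)=S_\omega(\varphi)-\frac{1}{2}Q(\varphi)$, and the shape analysis of the single function $g$ suffices. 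You instead evaluate $Q$ along the flow, $R(\sigma)=S_\omega(\varphi^\sigma)-\frac{1}{2}Q(\varphi^\sigma)$, where the logarithms cancel as well; the price is that you must also locate the maximizer $\sigma_1$ of $j(\sigma)=S_\omega(\varphi^\sigma)$ itself via your $\Phi$-analysis, which the paper avoids. Your identities $j'=-\sigma\Phi$ and $R'=\frac{\sigma^2}{2}\Phi'$ are correct (I verified them against Proposition \ref{Tsigma1}), and the latter, which makes $R$ monotone past the unique critical point $\sigma_c<\sigma_1$, is a nice structural observation absent from the paper. Two small points: when $q_\varphi=0$ there is no $\sigma_c$ and $R$ is increasing on all of $(0,\infty)$, which only simplifies the argument; and your claim $\max_\sigma j=j(\sigma_1)$ tacitly needs $j(\sigma_1)\geq j(0^+)$, which indeed follows from $j(\sigma_1)\geq j(1)\geq j(0^+)$.

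The one inaccurate statement is your closing remark that the borderline $Q(\varphi)=0$ ``is avoided in the applications'': in the proof of Theorem \ref{blowup1} the lemma is invoked precisely at a time $\overline t$ with $Q(\psi(\overline t))=0$, to establish invariance of the blow-up set. Fortunately your chain still suffices there, since at that time $S_\omega(\psi(\overline t))=S_\omega(\psi_0)<S_\omega(\varphi_\omega)$ by conservation, so even the non-strict inequality $S_\omega(\varphi_\omega)\leq S_\omega(\psi(\overline t))$ yields the contradiction. Moreover, your honesty about the borderline is warranted: strict inequality genuinely fails there, since for $E(\varphi_\omega)\geq 0$ the choice $\varphi=\varphi_\omega$ satisfies all the hypotheses and gives equality (as $Q(\varphi_\omega)=0$), so your sketched fallback via equality in Proposition \ref{variational}(b) cannot rescue strictness --- and the paper's own proof shares exactly this defect, its chain being non-strict when $\sigma_0=1$. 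In summary, your argument, like the paper's, proves the lemma with $\leq$ in general and with $<$ whenever $Q(\varphi)<0$ or $\|\varphi\|_{p+1}\neq\|\varphi_\omega\|_{p+1}$ (your shape analysis gives strictness in the latter case too, since $j$ increases strictly on $(z_1,\sigma_1)$ and $j<j(0^+)\leq j(1)$ on $(0,z_1]$), which is all that the applications require.
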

\begin{proof}
Let  
\[
\sigma_0=\left(\frac{\|\varphi_{\omega}\|_{p+1}^{p+1}}{\|\varphi\|_{p+1}^{p+1}} \right)^{\frac{1}{p-1}}\ .
\]
Then $\|\varphi^{\sigma_0}\|_{p+1}=\|\varphi_\omega\|_{p+1}$ and thanks to Lemma \ref{variational} it follows $S_\omega(\varphi_\omega)\leq S_\omega(\varphi^{\sigma_0})\ .$  Now consider the real function
\begin{align*}
g(\sigma):=S_\omega(\varphi^\sigma)-\frac{\sigma^2}{2}Q(\varphi)=\frac{\omega}{2}\|\varphi\|^2+\frac{\sigma^2}{4\pi}(\log\sigma -\frac{1}{2})|q_\varphi|^2 -\frac{\sigma^2}{p+1}\left(\sigma^{p-3}-\frac{p-1}{2}\right)\|\varphi\|_{p+1}^{p+1}
\end{align*}
Suppose that $g(\sigma_0)\leq g(1);$ then, from the variational characterization \ref{variational} of $\varphi_\omega$ and $Q(\varphi)\leq 0$ it follows
\begin{align*}
S_\omega(\varphi_\omega)\leq S_\omega(\varphi^{\sigma_0})\leq S_\omega(\varphi^{\sigma_0}) -\frac{\sigma_0^2}{2}Q(\varphi)\leq  S_\omega(\varphi) -\frac{1}{2}Q(\varphi) 
\end{align*}
which is the thesis. So it is enough to show that $g(\sigma_0)\leq g(1)\ .$ Actually we will show that $\sigma=1$ is the unique point of absolute maximum of $g$.
One has 
\begin{align*}
g'(\sigma)=B\sigma\log\sigma-A\sigma(\sigma^{p-3} -1)\ .
\end{align*}
It is immediate that $\sigma=1$ is a root. An elementary analysis shows that a second root $\sigma^*$ exists in $(0,1]$. It is an easy check that in $\sigma=1$ there is a maximum and in $\sigma^*\in (0,1)$ there is a minimum whatever are $A$ and $B$. Moreover, being $Q(\varphi)\leq 0$ and  $E(\varphi)>0$, one has that 
\begin{align*}
g(1)=S_\omega(\varphi) -\frac{1}{2}Q(\varphi)\geq S_\omega(\varphi)\geq \frac{\omega}{2}\|\varphi\|^2=g(0^+)\ .
\end{align*}
Finally, thanks to $p>3$ one has $\lim_{\sigma\to +\infty}g(\sigma)=-\infty$ and this ends the proof.
\end{proof}
\begin{proof}[Proof of Theorem \ref{blowup1}]
Let us set 
\begin{align*}
U_\omega=\left\{\varphi\in \DD^{\frac{1}{2}}(\RE^2)\  \text{s.t.}\ S_\omega(\varphi)<S_\omega(\varphi_\omega), \  E(\varphi)\geq 0, \ Q(\varphi)<0  \right\}.
\end{align*}
 We firstly show that the set $U_\omega$ is invariant for the flow of \eqref{cauchy}. Let $\psi_0\in U_\omega$ and $\psi(t)$ the corresponding weak solution of \eqref{cauchy}. Thanks to the conservation law of mass and energy, one has that 
$S_\omega(\psi(t))<S_\omega(\varphi_\omega)\ \text{and}\ E(\psi(t))\geq 0 \ \forall t\in (0, T^*)$. It remains to show that $Q(\psi(t))<0\ .$ Suppose, by absurd that there exist a time $\overline t\in (0, T^*)$ such that $Q(\psi(\overline t))=0$. Being necessarily $\psi(\overline t)\neq 0$, applying Lemma \ref{mainlemma1} one obtains
\begin{align*}
S_\omega(\varphi_\omega)<S_\omega(\psi(\overline t))-\frac{1}{2}Q(\psi(\overline t))=S_\omega(\psi(\overline t))
\end{align*}
against the hypotheses. So $Q(\psi(t))<0\ \forall t\in (0, T^*)\ .$ Now let $\psi_0\in U_\omega\cap \Sigma$, it follows from Lemma \ref{lem1sec3} and the invariance of $U_\omega $ that the solution $\psi(t)\in U_\omega\cap \Sigma\ \forall t\in (0,T^*)\ .$ From Lemma \ref{lem2sec3} and in particular \ref{virial2}, exploiting conservation laws of mass and energy, it follows that 
\begin{align*}
\frac{1}{8}\frac{d^2}{dt^2} I_\psi (t) = Q(\psi(t))<2(S_\omega(\psi(t))-S_{\omega}(\varphi_\omega))=2(S_\omega(\psi_0)-S_{\omega}(\varphi_\omega))<0 \ \ \forall t\in (0, T^*(\psi_0))
\end{align*}
and this implies $T^*(\psi_0)<+\infty\ $ by the classical elementary concavity estimate.
\end{proof}
\begin{proof}[Proof of Theorem \ref{stronginst1}]
By elliptic regularity it follows that $\varphi_\omega\in \Sigma\ .$  Now consider $\varphi_\omega^{\sigma}(\x)=\sigma\varphi_\omega(\sigma\x)\in \Sigma\ .$ Notice that from formula \eqref{ABC} and formula \eqref{A=C}
\begin{align*}
E(\varphi_\omega)>0\ \iff\ \frac{1}{2}(A-\frac{B}{2})-\frac{1}{p-1}C>0 \iff\ \frac{1}{2\pi}|q_\omega|^2<2\frac{p-3}{p+1}\|\varphi_\omega\|_{p+1}^{p+1}
\end{align*}
As already known,  $\sigma=1$ is a stationary point of 
$\sigma\mapsto S_\omega(\varphi_\omega^\sigma) ,$
Moreover, $\frac{d^2}{d\sigma^2}S_\omega(\varphi_\omega^\sigma)\vert_{\sigma=1}=\{(A+B)+B\log\sigma-A(p-2)\sigma^{p-3}\}\vert_{\sigma=1}=A(3-p) +B$ so that 
\begin{align*}
\frac{d^2}{d\sigma^2}S_\omega(\varphi_\omega^{\sigma})\vert_{\sigma=1}<0 \ \iff B<(p-3)A \ \iff \ \frac{1}{2\pi}|q_\omega|^2< \frac{(p-1)(p-3)}{p+1}\|\varphi_\omega\|_{p+1}^{p+1}
\end{align*}
This means that $p>3$ and $E(\varphi_\omega)>0$ imply that $\sigma=1$ is a local maximum for $\sigma\mapsto S_\omega(\varphi_\omega^\sigma) $ and actually the absolute maximum, thanks to $S(\varphi_\omega)>\omega\|\varphi_\omega\|^2=S(\varphi_\omega^\sigma)\vert_{0^+}$.
 Consequently  $S_\omega(\varphi_\omega^\sigma)<S_\omega(\varphi_\omega)\ \ \forall \sigma>1\ .$ Finally from formula \eqref{Qsigma} 
and $\sigma>1$ one has
\begin{align*}
Q(\varphi^\sigma_\omega)=\sigma\frac{d}{d\sigma}S_\omega(\varphi_\omega^\sigma)<0\ .
\end{align*}
To summarize, $\varphi^\sigma_\omega\in U_\omega\cap\Sigma \ \forall \sigma>1 .$ Being $\|\varphi^\sigma_\omega-\varphi_\omega\|_{\DD^{\frac{1}{2}}}\to 0$ as $\sigma\to 1$ the proof is complete.
\end{proof}
\begin{remark}
\label{omegagrande}
The condition  $E(\varphi_\omega)>0\ $ is expected to be true for $\omega>\omega^*$ great enough. That this should be true can be understood by means of the scaling $\varphi_{\omega}(\x)\rightarrow \hat\varphi_\omega(\x)=\omega^{-\frac{1}{p-1}}\varphi_{\omega}(\frac{\x}{\sqrt \omega})\ .$ One has
\beq
\label{scaled}
\mathcal H_{\hat \alpha}\hat\varphi_\omega + \hat\varphi_\omega - |\hat\varphi_\omega|^{p-1}|\hat\varphi_\omega|=0
 \eeq
 with the modified parameter $\hat\alpha=\alpha + \frac{1}{4\pi}\log\omega.$ Formally, $\hat\alpha\to +\infty \ \text{as}\ \omega\to \infty$ and the operator $\mathcal H_{\hat \alpha} \rightarrow -\Delta$ so that \eqref{scaled} reduces to the standard NLS, for which it is well known that the ground state has positive energy if $p>3$ (see for example Corollary 8.1.3 in \cite{caz}). The previous formal argument works rigorously for fairly general Schr\"odinger operators $-\Delta+V$ (see \cite{FO03}).
\end{remark}

\section{Strong instability with $\frac{d^2}{d\sigma^2}S_\omega(\varphi_\omega^\sigma)\vert_{\sigma=1}\leq0 $.}
It appears from the proof of the previous result that the condition $E(\varphi_\omega)\geq 0$  is in general stronger than the condition $\frac{d^2}{d\sigma^2}S_\omega(\varphi_\omega^\sigma)\vert_{\sigma=1}\leq0$. So is a natural generalization of the result given in the previous section  consists in  assuming $\frac{d^2}{d\sigma^2}S_\omega(\varphi_\omega^\sigma)\vert_{\sigma=1}\leq0\  $ as the condition selecting the frequencies of the ground waves the instability of which we want to prove. This more general condition has been advocated by M. Ohta in several papers with various collaborators (\cite{FO03, GO20}, see also \cite{O19}).
\begin{dfn}
Let $\varphi_\omega\in \mathcal G$ and set 
\begin{align*}
V_\omega=\left\{\varphi\in \DD^{\frac{1}{2}}(\RE^2)\  \text{s.t.}\ S_\omega(\varphi)<S_\omega(\varphi_\omega),  \ Q(\varphi)<0, \ \|\varphi\|\leq \|\varphi_\omega \|, 	 \|\varphi\|_{p+1}> \|\varphi_\omega\|_{p+1} \right\}\ .
\end{align*}
\end{dfn}

\begin{lem}\label{mainlemma2} 
Let $p>3$, $\varphi_\omega\in \mathcal G$ with $\omega$ s.t. $\frac{d^2}{d\sigma^2}S_\omega(\varphi_\omega)\leq0$, and let $\varphi\in \DD^{\frac{1}{2}}\ $ such that $\varphi\neq 0$, 
$Q(\varphi)\leq 0$, $\|\varphi\|\leq \|\varphi_\omega \|$, 	 $\|\varphi\|_{p+1}> \|\varphi_\omega\|_{p+1} $ ; then
\begin{align*}
  S_{\omega}(\varphi_\omega)<S_\omega(\varphi)-\frac{1}{2}Q(\varphi)\ .
\end{align*}
\end{lem}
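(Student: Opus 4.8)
The plan is to mimic the structure of the proof of Lemma \ref{mainlemma1}, but to replace the scaling trick there with a direct argument that exploits the new constraints $\|\varphi\|\leq\|\varphi_\omega\|$ and $\|\varphi\|_{p+1}>\|\varphi_\omega\|_{p+1}$ together with the hypothesis $\frac{d^2}{d\sigma^2}S_\omega(\varphi_\omega^\sigma)\vert_{\sigma=1}\leq 0$. As before I would introduce the rescaling parameter
\[
\sigma_0=\left(\frac{\|\varphi_{\omega}\|_{p+1}^{p+1}}{\|\varphi\|_{p+1}^{p+1}}\right)^{\frac{1}{p-1}},
\]
so that $\|\varphi^{\sigma_0}\|_{p+1}=\|\varphi_\omega\|_{p+1}$ and Proposition \ref{variational} gives $S_\omega(\varphi_\omega)\leq S_\omega(\varphi^{\sigma_0})$. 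The new feature is that the additional assumption $\|\varphi\|_{p+1}>\|\varphi_\omega\|_{p+1}$ forces $\sigma_0<1$, which is what will let me avoid the global maximum analysis that was available in Lemma \ref{mainlemma1}.

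Following the earlier proof I would again consider the auxiliary function
\[
g(\sigma):=S_\omega(\varphi^\sigma)-\frac{\sigma^2}{2}Q(\varphi),
\]
for which $g(1)=S_\omega(\varphi)-\frac12 Q(\varphi)$, and it suffices to establish $g(\sigma_0)\leq g(1)$. Since $\sigma_0\in(0,1)$, the desired inequality will follow once I show that $g$ is nondecreasing on $[\sigma_0,1]$, equivalently that $g'(\sigma)\geq 0$ there. Using the identities of Proposition \ref{derivateS} I can write $g'(\sigma)$ in terms of the constants $A,B,C$ attached to $\varphi$ (not to $\varphi_\omega$); the point where the hypothesis on $\varphi_\omega$ must enter is in comparing the second-derivative information at $\sigma=1$ for the ground state with the sign of $g'$ on $(\sigma_0,1)$. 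The condition $\frac{d^2}{d\sigma^2}S_\omega(\varphi_\omega^\sigma)\vert_{\sigma=1}\leq 0$ translates, via \eqref{S''} and \eqref{A=C}, into $B\leq (p-3)A$ for the ground-state constants, and I expect the mass constraint $\|\varphi\|\leq\|\varphi_\omega\|$ to be exactly what is needed to transfer a comparable inequality to the constants of $\varphi$.

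The main obstacle is precisely this transfer step. Unlike Lemma \ref{mainlemma1}, where the hypothesis $E(\varphi)\geq 0$ applied directly to $\varphi$ and allowed a clean self-contained convexity/concavity analysis of $g$, here the sign condition is imposed on $\varphi_\omega$ while the function $g$ is built from $\varphi$. I therefore expect the delicate part to be using the two norm constraints to relate $A_\varphi$, $B_\varphi$, $C_\varphi$ to $A_{\varphi_\omega}$, $B_{\varphi_\omega}$, $C_{\varphi_\omega}$: the inequality $\|\varphi\|_{p+1}>\|\varphi_\omega\|_{p+1}$ controls $C$ and fixes $\sigma_0<1$, while $\|\varphi\|\leq\|\varphi_\omega\|$ should control the zero-order (mass) contribution to the action. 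I would try to show monotonicity of $g$ on $[\sigma_0,1]$ by examining $g'$ and $g''$ and ruling out an interior strict maximum using these comparisons; should a direct monotonicity argument fail, the fallback is to bound $g(\sigma_0)-g(1)$ directly by integrating $g'$ and estimating the integrand with the constraints, exactly as in the endgame of Lemma \ref{mainlemma1} where the limiting behaviour $g(0^+)=\frac{\omega}{2}\|\varphi\|^2$ and $g(+\infty)=-\infty$ were used. Once $g(\sigma_0)\leq g(1)$ is in hand, the chain
\[
S_\omega(\varphi_\omega)\leq S_\omega(\varphi^{\sigma_0})\leq S_\omega(\varphi^{\sigma_0})-\tfrac{\sigma_0^2}{2}Q(\varphi)=g(\sigma_0)\leq g(1)=S_\omega(\varphi)-\tfrac12 Q(\varphi)
\]
closes the argument, with the middle inequality using $Q(\varphi)\leq 0$.
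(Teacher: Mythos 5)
You set up exactly the paper's framework---the same $\sigma_0$, the same auxiliary function $g$, the same closing chain of inequalities, and the correct translation of $\frac{d^2}{d\sigma^2}S_\omega(\varphi_\omega^{\sigma})\vert_{\sigma=1}\leq 0$ into $B\leq (p-3)A$ for the ground-state constants---but the proof of the key inequality $g(\sigma_0)\leq g(1)$ is missing, and your primary strategy for it cannot work. You propose to show $g'\geq 0$ on $[\sigma_0,1]$; however, for $q_\varphi\neq 0$ one has $g'(\sigma)=\sigma\bigl[\tfrac{1}{2\pi}|q_\varphi|^2\log\sigma+\tfrac{p-1}{p+1}(1-\sigma^{p-3})\|\varphi\|_{p+1}^{p+1}\bigr]$, which is negative near $\sigma=0^+$ because the logarithm dominates; so $g$ has an interior local minimum $\sigma^*\in(0,1)$ (a fact already used in the proof of Lemma \ref{mainlemma1}) and is \emph{not} monotone on $[\sigma_0,1]$ whenever $\sigma_0<\sigma^*$. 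Since nothing in the hypotheses bounds $\sigma_0$ away from $0$, monotonicity is simply false in general, and one must compare the endpoint values $g(\sigma_0)$ and $g(1)$ across the dip---a genuinely quantitative matter, which is where your sketch stops at ``I would try'' and ``the fallback is''.

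That endpoint comparison is the heart of the paper's proof and requires ingredients your proposal never identifies. The paper rewrites $g(\sigma_0)\leq g(1)$ as the explicit inequality $|q_\varphi|^2\leq \frac{4\pi}{p+1}\frac{2\sigma_0^{p-1}-(p-1)\sigma_0^2+(p-3)}{2\sigma_0^2\log\sigma_0-\sigma_0^2+1}\|\varphi\|_{p+1}^{p+1}$ and then produces a competing bound $|q_\varphi|^2\leq h(\sigma_0)\|\varphi\|_{p+1}^{p+1}$ by combining: (i) the hypothesis on $\varphi_\omega$ in the form $\frac{1}{2\pi}|q_\omega|^2\leq \frac{(p-3)(p-1)}{p+1}\|\varphi_\omega\|_{p+1}^{p+1}$; (ii) the Pohozaev identity $\omega\|\varphi_\omega\|^2=\frac{|q_\omega|^2}{4\pi}+\frac{2}{p+1}\|\varphi_\omega\|_{p+1}^{p+1}$, which converts (i) into a bound on $\omega\|\varphi_\omega\|^2$; (iii) the mass constraint $\|\varphi\|\leq\|\varphi_\omega\|$ together with $\sigma_0^{p-1}\|\varphi\|_{p+1}^{p+1}=\|\varphi_\omega\|_{p+1}^{p+1}$, which transfers that bound to $\omega\|\varphi\|^2$; and (iv) the Nehari inequality $N_\omega(\varphi^{\sigma_0})\geq 0$ from Proposition \ref{variational} combined with $Q(\varphi)\leq 0$, which is what finally controls $|q_\varphi|^2$. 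The conclusion then reduces to the one-variable inequality $f(\sigma_0)\geq 0$ for an explicit $f$ with $f(1)=0$, settled by showing $f$ is decreasing. Your intuition that the mass constraint ``should control the zero-order contribution'' points in the right direction, but without the Pohozaev identity and the Nehari positivity at $\sigma_0$ there is no route from the second-derivative hypothesis on $\varphi_\omega$ to any bound on $|q_\varphi|^2$; the proposal therefore has a genuine gap at exactly the step it flags as delicate, and its primary route would fail even before reaching that step.
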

\begin{proof}
Let  
\[
\sigma_0=\left(\frac{\|\varphi_{\omega}\|_{p+1}^{p+1}}{\|\varphi\|_{p+1}^{p+1}} \right)^{\frac{1}{p-1}}\ .
\]
Then $\sigma_0\in (0,1]$, $\|\varphi^{\sigma_0}\|_{p+1}=\|\varphi_\omega\|_{p+1}=\sigma_0^{\frac{p-1}{p+1}}\|\varphi\|_{p+1}$.  Now consider the real function
\begin{align*}
g(\sigma):=S_\omega(\varphi^\sigma)-\frac{\sigma^2}{2}Q(\varphi)=\frac{\omega}{2}\|\varphi\|^2+\frac{\sigma^2}{4\pi}(\log\sigma -\frac{1}{2})|q_\varphi|^2 -\frac{\sigma^2}{p+1}(\sigma^{p-3}-\frac{p-1}{2})\|\varphi\|_{p+1}^{p+1}
\end{align*}
Suppose that $g(\sigma_0)\leq g(1)\ ;$ then, thanks to lemma \ref{variational} it follows $S_\omega(\varphi_\omega)\leq S_\omega(\varphi^{\sigma_0})$ and being $Q(\varphi)\leq 0$ one has
\begin{align*}
S_\omega(\varphi_\omega)\leq S_\omega(\varphi^{\sigma_0})\leq S_\omega(\varphi^{\sigma_0}) -\frac{\sigma_0^2}{2}Q(\varphi)\leq  S_\omega(\varphi) -\frac{1}{2}Q(\varphi) 
\end{align*}
which is the thesis. So it is enough to show that $g(\sigma_0)\leq g(1)\ .$ This inequality is equivalent to
\begin{align*}
\frac{\sigma_0^2}{4\pi}(\log\sigma_0 -\frac{1}{2})|q_\varphi|^2 -\frac{\sigma_0^2}{p+1}\left(\sigma_0^{p-3}-\frac{p-1}{2}\right)\|\varphi\|_{p+1}^{p+1}\leq-\frac{1}{8\pi}|q_\varphi|^2 +\frac{p-3}{2(p+1)}\|\varphi\|_{p+1}^{p+1}
\end{align*}
\begin{align*}
(\frac{\sigma_0^2\log\sigma_0}{4\pi} -\frac{\sigma_0^2}{8\pi}+\frac{1}{8\pi})|q_\varphi|^2 \leq \frac{1}{p+1}\left(\sigma_0^{p-1}-\frac{(p-1)}{2}\sigma_0^2+\frac{p-3}{2}\right)\|\varphi\|_{p+1}^{p+1}
\end{align*}
or also
\begin{align*}
|q_\varphi|^2 \leq \frac{4\pi}{p+1}\frac{2\sigma_0^{p-1}-(p-1)\sigma_0^2+(p-3)}{2\sigma_0^2\log\sigma_0 -\sigma_0^2+1}\|\varphi\|_{p+1}^{p+1}\ .
\end{align*}
The idea is to find an estimate of the type $|q_\varphi|^2 \leq h(\sigma_0)\|\varphi\|_{p+1}^{p+1}$ by making use of the hypotheses on $\varphi$, and then to verify that 
\[
h(\sigma_0)\leq  \frac{4\pi}{p+1}\frac{2\sigma_0^{p-1}-(p-1)\sigma_0^2+(p-3)}{2\sigma_0^2\log\sigma_0 -\sigma_0^2+1}\ \ \ \forall \sigma \in (0,1)
\]
that would prove $g(\sigma_0)\leq g(1)\ .$\\
Notice that from \eqref{S''} and \eqref{A=C}, 
\begin{align}
\frac{d^2}{d\sigma^2}S_\omega(\varphi_\omega^{\sigma})\vert_{\sigma=1}\leq 0\ &\iff \ \mathcal F(\varphi_\omega)+\frac{3}{4\pi}|q_\omega|^2 \leq \frac{(p-1)(p-2)}{p+1}\|\varphi_\omega\|_{p+1}^{p+1} \nonumber\\ 
&\iff \frac{1}{2\pi}|q_\omega|^2 \leq \frac{(p-3)(p-1)}{p+1}\|\varphi_\omega\|_{p+1}^{p+1} \label{aggiunta}
\end{align}
The following Pohozaev identity is obtained applying the computations done in Lemma \ref{lem2sec3} to the stationary equation \eqref{stationary}, or equivalently combining the constraints $N_\omega(\varphi_\omega)=0$ and $Q(\varphi_\omega)=0$:
\begin{align*}
\omega\|\varphi_\omega\|^2=\frac{|q_\omega|^2}{4\pi}+\frac{2}{p+1}\|\varphi_\omega\|_{p+1}^{p+1}
\end{align*}
and making use of inequality \eqref{aggiunta} one gets
\begin{align*}
\omega\|\varphi_\omega\|^2 = \frac{|q_\omega|^2}{4\pi}+\frac{2}{p+1}\|\varphi_\omega\|_{p+1}^{p+1}\leq \left( \frac{2}{p+1} +\frac{1}{2}\frac{(p-3)(p-1)}{p+1} \right)\|\varphi_\omega\|_{p+1}^{p+1}= \frac{p^2-4p+7}{2(p+1)}\|\varphi_\omega\|_{p+1}^{p+1}
\end{align*}
Now, from $\|\varphi\|\leq \|\varphi_\omega \|$, 	 $\sigma_0^{p-1}\|\varphi\|_{p+1}=\|\varphi_\omega\|_{p+1} $ one obtains
\begin{align}\label{omegaphi2}
\omega\|\varphi\|^2 \leq \frac{p^2-4p+7}{2(p+1)}\sigma_0^{p-1}\|\varphi\|_{p+1}^{p+1}\ .
\end{align}
The condition $N(\varphi^{\sigma_0})\geq 0$ which holds thanks to proposition \ref{variational} is equivalent to
\begin{align*}
\sigma_0^2\mathcal F(\varphi)+\frac{1}{2\pi}\sigma_0^2\log\sigma_0 |q_{\varphi}|^2+\omega|\varphi|^2 -\sigma_0^{p-1}\|\varphi\|^{p+1}_{p-1} \geq 0
\end{align*}
and exploiting $Q(\varphi)< 0$ one arrives to
\begin{align*}
-\frac{1}{2\pi}\sigma_0^2\log\sigma_0 |q_\varphi|^2< \sigma_0^2\frac{p-1}{p+1}\|\varphi\|^{p+1}_{p-1}-\frac{\sigma_0^2}{4\pi}|q_{\varphi}|^2  +\omega|\varphi|^2 -\sigma_0^{p-1}\|\varphi\|^{p+1}_{p-1} 
\end{align*}
that combined with \eqref{omegaphi2} yields
\begin{align*}
\frac{1}{4\pi}\sigma_0^2(1-2\log\sigma_0) |q_{\varphi}|^2< \left(\sigma_0^2\frac{p-1}{p+1} + \frac{p^2-4p+7}{2(p+1)}\sigma_0^{p-1} -\sigma_0^{p-1}\right)\|\varphi\|^{p+1}_{p-1} 
\end{align*}
or also
\begin{align*}
 |q_{\varphi}|^2< \frac{4\pi}{p+1} \frac{\left(2(p-1)+ (p^2-4p+7)\sigma_0^{p-3} -2(p+1)\sigma_0^{p-3}\right)}{2(1-2\log\sigma_0)}\|\varphi\|^{p+1}_{p-1} 
\end{align*}
so that the inequality that has to be checked is 
\begin{align*}
\frac{2(p-1)+ (p^2-4p+7)\sigma_0^{p-3} -2(p+1)\sigma_0^{p-3}}{2(1-2\log\sigma_0)}\leq  \frac{2\sigma_0^{p-1}-(p-1)\sigma_0^2+(p-3)}{2\sigma_0^2\log\sigma_0 -\sigma_0^2+1}\ .
\end{align*}
The previous inequality is equivalent to $f(\sigma_0)\geq 0$ where
\begin{align*}
f(\sigma)=(p-3)^2\left(\sigma^{p-1}- 2\sigma^{p-1}\log\sigma \right) -4(p-3)\log\sigma - 4 - (p^2-6p+5)\sigma^{p-3}
\end{align*}
Notice that $f(0^+)=+\infty,\ \ f(1)=0,$ so that to prove the inequality it is sufficient to prove that $f$ is decreasing. That this is indeed the case is a lengthy but elementary check based on the analysis of the derivatives of the function $f$ until the third included. We omit the details.
\end{proof}
\begin{teo}\label{blowup2}
Let $g = -1 $, $p > 3$ and $\psi_0 \in \Sigma \cap V_\omega\ .$   Then $T^*(\psi_0) < +\infty\ .$\\
\end{teo}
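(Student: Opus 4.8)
The plan is to follow verbatim the architecture of the proof of Theorem \ref{blowup1}, with the invariant set $U_\omega$ replaced by $V_\omega$ and Lemma \ref{mainlemma1} replaced by Lemma \ref{mainlemma2}. Concretely, I would first show that $V_\omega$ is invariant under the flow of \eqref{cauchy}, so that for $\psi_0\in\Sigma\cap V_\omega$ the solution $\psi(t)$ stays in $\Sigma\cap V_\omega$ on its whole existence interval; then the virial identity \eqref{virial2} combined with Lemma \ref{mainlemma2} forces a strict concavity of the nonnegative variance $I(t)$, which is incompatible with a global solution.

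For the invariance I would separate the four defining conditions of $V_\omega$. Two of them are immediate from the conservation laws in the well-posedness theorem: since $S_\omega=E+\tfrac{\omega}{2}\|\cdot\|^2$ is conserved, $S_\omega(\psi(t))=S_\omega(\psi_0)<S_\omega(\varphi_\omega)$ for all $t$, and since the $L^2$-mass is conserved, $\|\psi(t)\|=\|\psi_0\|\leq\|\varphi_\omega\|$ for all $t$. The genuinely new point, absent in Theorem \ref{blowup1}, is the propagation of the lower bound $\|\psi(t)\|_{p+1}>\|\varphi_\omega\|_{p+1}$. I would argue this by continuity together with Proposition \ref{variational}: the map $t\mapsto\|\psi(t)\|_{p+1}$ is continuous (the flow is continuous into $\DD^{\frac{1}{2}}$, which embeds into $L^{p+1}(\RE^2)$), it starts strictly above $\|\varphi_\omega\|_{p+1}$, and it can never attain the value $\|\varphi_\omega\|_{p+1}$, since at a putative first such time $\bar t$ part b) of Proposition \ref{variational} would give $S_\omega(\psi(\bar t))\geq S_\omega(\varphi_\omega)$, contradicting $S_\omega(\psi(\bar t))=S_\omega(\psi_0)<S_\omega(\varphi_\omega)$. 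Hence $\|\psi(t)\|_{p+1}>\|\varphi_\omega\|_{p+1}>0$ for all $t$, and in particular $\psi(t)\neq0$.

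With these three conditions secured along the whole trajectory, all hypotheses of Lemma \ref{mainlemma2} except $Q\leq0$ hold at every time, and I would close the invariance of $Q(\psi(t))<0$ exactly as in Theorem \ref{blowup1}: if $Q$ reached $0$ at a first time $\bar t$, then Lemma \ref{mainlemma2} applied to $\varphi=\psi(\bar t)$ would give $S_\omega(\varphi_\omega)<S_\omega(\psi(\bar t))-\tfrac{1}{2}Q(\psi(\bar t))=S_\omega(\psi_0)<S_\omega(\varphi_\omega)$, a contradiction. Thus $V_\omega$ is invariant. Finally, for $\psi_0\in\Sigma\cap V_\omega$, Lemma \ref{lem1sec3} keeps the solution in $\Sigma$, so $I(t)$ is $C^2$ and $\tfrac{1}{8}\ddot I(t)=Q(\psi(t))$ by \eqref{virial2}; applying Lemma \ref{mainlemma2} at every time yields $Q(\psi(t))<2\bigl(S_\omega(\psi_0)-S_\omega(\varphi_\omega)\bigr)=:-\tfrac{c}{8}<0$, so $\ddot I(t)\leq -c<0$. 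Since $I(t)\geq0$, the classical concavity estimate forces $T^*(\psi_0)<+\infty$.

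The main obstacle, and the only real departure from Theorem \ref{blowup1}, is the propagation of the two extra constraints built into $V_\omega$: the mass bound is free from conservation, whereas the $L^{p+1}$ lower bound requires the continuity-plus-Proposition \ref{variational} argument above, which is exactly what guarantees that the hypotheses of Lemma \ref{mainlemma2} remain in force all along the flow. Once this is in place, the $Q$-invariance and the concavity blow-up are a transcription of the earlier argument with Lemma \ref{mainlemma2} substituted for Lemma \ref{mainlemma1}.
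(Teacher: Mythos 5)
Your proposal is correct and follows essentially the same route as the paper's own proof: conservation of mass and action for the first two constraints, the continuity argument via Proposition \ref{variational} to propagate $\|\psi(t)\|_{p+1}>\|\varphi_\omega\|_{p+1}$, Lemma \ref{mainlemma2} to close the invariance of $Q(\psi(t))<0$, and the virial identity with the uniform bound $Q(\psi(t))<2\left(S_\omega(\psi_0)-S_\omega(\varphi_\omega)\right)<0$ followed by the classical concavity argument. You in fact spell out slightly more carefully than the paper the contradiction step behind ``$Q(\psi(t))<0$ is a consequence of Lemma \ref{mainlemma2}'', but the logical content is identical.
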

\begin{proof}
It is already known that $\psi(t) \in \Sigma$ and by the conservation laws that $S_\omega(\psi(t))<S_\omega(\varphi_\omega)\ $ and $\|\psi(t)\|\leq \|\varphi_\omega \| .$ Thanks to Proposition \ref{variational} and $S_\omega(\psi(t))<S_\omega(\varphi_\omega)\ $ necessarily $\|\psi(t)\|_{p+1}\neq\|\varphi_\omega\|_{p+1} \ \forall t\in(0, T^*(\psi_0))\ ;$ being $\|\psi_0\|_{p+1}> \|\varphi_\omega\|_{p+1}$,  by continuity $\|\psi(t)\|_{p+1}> \|\varphi_\omega\|_{p+1} \ \forall t\in(0, T^*(\psi_0))\ .$ Finally, $Q(\psi(t))<0$ is a consequence of Lemma \ref{mainlemma2}. Now, from \ref{mainlemma2} and the virial identity \ref{virial2} one gets
\begin{align*}
\frac{1}{8}\frac{d^2}{dt^2} I_\psi (t) = Q(\psi(t))<2(S_\omega(\psi(t))-S_{\omega}(\varphi_\omega))=2(S_\omega(\psi_0)-S_{\omega}(\varphi_\omega))<0 \ \ \ \forall t\in (0, T^*)\ .
\end{align*}
This gives the thesis by the classical concavity argument.
\end{proof}

Now we will consider the standing waves and we will show that they are strongly unstable.
\begin{teo}
\label{stronginst2}
Let $g = -1 $ and $p > 3$. Let $\omega> -E_\alpha$ and $\varphi_\omega \in \mathcal G$ such that $\frac{d^2 S(\varphi_\omega^\sigma)}{d \sigma^2}\vert_{\sigma=1}\leq 0\ .$ Then the standing wave $\varphi_\omega e^{i\omega t}$ is strongly unstable.\\
\end{teo}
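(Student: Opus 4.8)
The plan is to reduce the strong instability to the blow-up criterion of Theorem \ref{blowup2} by exhibiting, in every $\DD^{\frac{1}{2}}$-neighborhood of $\varphi_\omega$, an initial datum lying in $\Sigma\cap V_\omega$. The natural candidates are the dilations $\varphi_\omega^\sigma=T^\sigma\varphi_\omega$ for $\sigma>1$ close to $1$, exactly as in the proof of Theorem \ref{stronginst1}. Since $\varphi_\omega\in\Sigma$ by elliptic regularity and the change of variables $\y=\sigma\x$ gives $\|\x\varphi_\omega^\sigma\|^2=\sigma^{-2}\|\x\varphi_\omega\|^2<\infty$, each $\varphi_\omega^\sigma$ lies in $\Sigma$; moreover $\|\varphi_\omega^\sigma-\varphi_\omega\|_{\DD^{\frac{1}{2}}}\to0$ as $\sigma\to1$. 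So it suffices to check that $\varphi_\omega^\sigma\in V_\omega$ for every $\sigma>1$, and then let $\sigma\to1^+$ in Definition \ref{stronginstdef}.

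Two of the four defining conditions of $V_\omega$ are immediate from Proposition \ref{Tsigma1}: mass preservation gives $\|\varphi_\omega^\sigma\|=\|\varphi_\omega\|$, so that the constraint $\|\varphi_\omega^\sigma\|\leq\|\varphi_\omega\|$ holds (with equality), while $\|\varphi_\omega^\sigma\|_{p+1}^{p+1}=\sigma^{p-1}\|\varphi_\omega\|_{p+1}^{p+1}>\|\varphi_\omega\|_{p+1}^{p+1}$ for $\sigma>1$. The remaining two conditions, $S_\omega(\varphi_\omega^\sigma)<S_\omega(\varphi_\omega)$ and $Q(\varphi_\omega^\sigma)<0$, both follow once I establish that $\sigma\mapsto S_\omega(\varphi_\omega^\sigma)$ is strictly decreasing on $(1,\infty)$: the first because its value at $\sigma=1$ is $S_\omega(\varphi_\omega)$, the second because $Q(\varphi_\omega^\sigma)=\sigma\,\frac{d}{d\sigma}S_\omega(\varphi_\omega^\sigma)$ by \eqref{Qsigma}.

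The crux is therefore this monotonicity, and it is exactly where the hypothesis $\frac{d^2}{d\sigma^2}S_\omega(\varphi_\omega^\sigma)\vert_{\sigma=1}\leq0$ enters. Using $A=C$ from \eqref{A=C}, formula \eqref{S'} rewrites as $\frac{d}{d\sigma}S_\omega(\varphi_\omega^\sigma)=\sigma\,h(\sigma)$ with $h(\sigma)=A(1-\sigma^{p-3})+B\log\sigma$, where $A=\frac{p-1}{p+1}\|\varphi_\omega\|_{p+1}^{p+1}>0$ since $\varphi_\omega\neq0$. One has $h(1)=0$ and $h'(\sigma)=\sigma^{-1}\bigl(B-A(p-3)\sigma^{p-3}\bigr)$, while by \eqref{S''} the hypothesis reads $A(3-p)+B\leq0$, i.e. $B\leq A(p-3)$. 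For $\sigma>1$ and $p>3$ this forces $A(p-3)\sigma^{p-3}>A(p-3)\geq B$, hence $h'(\sigma)<0$; together with $h(1)=0$ this yields $h(\sigma)<0$, so $\frac{d}{d\sigma}S_\omega(\varphi_\omega^\sigma)<0$ on $(1,\infty)$, as required.

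Consequently $\varphi_\omega^\sigma\in\Sigma\cap V_\omega$ for all $\sigma>1$, and Theorem \ref{blowup2} gives $T^*(\varphi_\omega^\sigma)<+\infty$; since these data approximate $\varphi_\omega$ in $\DD^{\frac{1}{2}}$, strong instability follows from Definition \ref{stronginstdef}. The only genuinely delicate point is the monotonicity argument, but it becomes elementary once the hypothesis is recast as $B\leq A(p-3)$ and $A>0$ is noted; Lemma \ref{mainlemma2} is not invoked directly here, serving instead as the backbone of the invariance of $V_\omega$ used inside Theorem \ref{blowup2}.
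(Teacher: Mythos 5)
Your proposal is correct and follows essentially the same route as the paper: dilated data $\varphi_\omega^\sigma$ with $\sigma>1$, the identities of Proposition \ref{derivateS} with the hypothesis recast as $B\leq A(p-3)$ via \eqref{A=C}, membership in $\Sigma\cap V_\omega$, blow-up by Theorem \ref{blowup2}, and the limit $\sigma\to1^+$. The only (harmless) cosmetic differences are that you obtain the monotonicity of $\sigma\mapsto S_\omega(\varphi_\omega^\sigma)$ by factoring $\frac{d}{d\sigma}S_\omega(\varphi_\omega^\sigma)=\sigma h(\sigma)$ and showing $h'<0$, where the paper instead descends from the sign of \eqref{S'''} through \eqref{S''}, and that you get $Q(\varphi_\omega^\sigma)<0$ directly from \eqref{Qsigma} rather than integrating \eqref{Q'} from $Q(\varphi_\omega)=0$.
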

\begin{proof} One has $\|\varphi^{\sigma}_\omega\|=\|\varphi_\omega\|,\ \|\varphi^{\sigma}_\omega\|_{p+1}=\sigma^{\frac{p-1}{p+1}}\|\varphi_\omega\|_{p+1}>\|\varphi_\omega\|_{p+1} \ \forall \sigma>1\ .$ Now consider the function $S(\varphi^\sigma_\omega)$ given in \ref{Ssigma}. We want to show that $S_\omega(\varphi^\sigma_\omega)<S_\omega(\varphi_\omega)\ \forall \sigma >1\ .$ Thanks to \eqref{S'''} in Proposition \ref{derivateS} we have $\frac{d^3}{d\sigma^3}S_\omega(\varphi_\omega^{\sigma})<0 \ \forall \sigma >1$ from which we deduce that $ \frac{d^2}{d\sigma^2}S_\omega(\varphi_\omega^{\sigma})$ is decreasing for $\sigma>1$. Exploiting the hypothesis $\frac{d^2 S(\varphi_\omega^\sigma)}{d \sigma^2}\vert_{\sigma=1}\leq 0$ we obtain $\frac{d^2 S(\varphi_\omega^\sigma)}{d \sigma^2}< 0\ \forall \sigma >1$ and consequently $\frac{d S(\varphi_\omega^\sigma)}{d \sigma}$ decreasing. Being $\frac{d S(\varphi_\omega^\sigma)}{d \sigma}\vert_{\sigma=1}=0$ we finally obtain that $S_\omega(\varphi^\sigma_\omega)<S_\omega(\varphi_\omega)\ \forall \sigma >1 $ as claimed. Finally, using properties \ref{Q=0} and \ref{Q'} of Proposition \ref{derivateS} we get $ \frac{d}{d\sigma}Q(\varphi_\omega^\sigma)=\frac{d}{d\sigma}S_\omega(\varphi_\omega^{\sigma})+\sigma\frac{d^2}{d\sigma^2}S_\omega(\varphi_\omega^{\sigma})<0$ by using the monotonicity properties just proved. By \ref{Q=0} one finally gets $Q(\varphi_\omega^\sigma)<Q(\varphi_\omega)=0.$ The proof is completed thanks to $\lim_{\sigma\to 1}\|\varphi_\omega^\sigma-\varphi_\omega \|_{\DD^{\frac{1}{2}}}=0$\ .
\end{proof}
\section*{Acknowledgments.}
\noindent The authors are grateful to their friend and colleague Claudio Cacciapuoti for useful discussions. D. Noja acknowledges for funding the EC grant IPaDEGAN (MSCA-RISE-778010). The authors acknowledge the support of the Gruppo Nazionale di Fisica Matematica (GNFM-INdAM).

\end{document}